\documentclass[11pt]{article}
\usepackage[utf8]{inputenc}
\usepackage{amsmath}
\usepackage{amssymb}
\usepackage{amstext}
\usepackage{amsthm}
\usepackage[linesnumbered,ruled,vlined]{algorithm2e}
\usepackage{algorithmic}
\usepackage{fullwidth}
\usepackage{float}
\usepackage{colonequals}
\usepackage{hyperref}
\usepackage{comment}
\usepackage{graphicx}
\usepackage{xcolor}         

\newtheorem{theorem}{Theorem}[section]
\newtheorem{lemma}[theorem]{Lemma}

\newtheorem{proposition}[theorem]{Proposition}
\newtheorem{corollary}[theorem]{Corollary}

\newtheorem{definition}[theorem]{Definition}
\newtheorem{remark}[theorem]{Remark}

\newcommand{\RR}{\mathbb{R}}

\newcommand{\NN}{\mathbb{N}}

\newcommand{\PP}{\mathbb{P}}
\newcommand{\EE}{\mathbb{E}}

\newcommand{\vp}{\varphi}


\newcommand{\sI}{\mathcal{I}}

\newcommand{\sN}{\mathcal{N}}

\newcommand{\one}{{\bf 1}}

\newcommand{\la}{\langle}
\newcommand{\ra}{\rangle}

\newcommand{\Unif}{\mathsf{Unif}}
\newcommand{\Vol}{\mathsf{Vol}}

\title{Asymptotics of smoothed Wasserstein distances \\ in the small noise regime}
\date{}

\usepackage{authblk}
\author[1]{Yunzi Ding}
\author[2]{Jonathan Niles-Weed\thanks{Supported in part by NSF grant DMS-2015291.}}

\affil[1]{Courant Institute of Mathematical Sciences, NYU}
\affil[2]{Courant Institute of Mathematical Sciences and the Center for Data Science, NYU}

\begin{document}

\maketitle

\begin{abstract}
  We study the behavior of the Wasserstein-$2$ distance between discrete measures $\mu$ and $\nu$ in $\RR^d$ when both measures are smoothed by small amounts of Gaussian noise. This procedure, known as \emph{Gaussian-smoothed optimal transport}, has recently attracted attention as a statistically attractive alternative to the unregularized Wasserstein distance. We give precise bounds on the approximation properties of this proposal in the small noise regime, and establish the existence of a phase transition: we show that, if the optimal transport plan from $\mu$ to $\nu$ is unique and a perfect matching, there exists a critical threshold such that the difference between $W_2(\mu, \nu)$ and the Gaussian-smoothed OT distance $W_2(\mu \ast \sN_\sigma, \nu\ast \sN_\sigma)$ scales like $\exp(-c /\sigma^2)$ for $\sigma$ below the threshold, and scales like $\sigma$ above it. These results establish that for $\sigma$ sufficiently small, the smoothed Wasserstein distance approximates the unregularized distance exponentially well.
\end{abstract}

\section{Introduction: optimal transport}
Optimal Transport (OT) has seen a recent surge of applications in machine learning, in areas such as generative modeling~\cite{ArjChiBot17,GenPeyCut18}, image processing~\cite{PitKokDah07, RubTomGui00,FeyChaVia17}, and domain adaptation~\cite{CouFlaTui14, CouFlaTui17}. A natural statistical question raised by these applications is to estimate the OT distances with samples. 
These distances, known as the Wasserstein distances, are defined by
\begin{equation*}
W_p^p(\mu, \nu) = \inf_{\pi \in \Pi(\mu, \nu)} \int \|x - y\|^p d \pi(x, y)\,,
\end{equation*}
where $\Pi(\mu, \nu)$ denotes the set of joint measures with marginals $\mu$ and $\nu$, known as \emph{transport plans}.
It is well known that plug-in estimators for this quantity, obtained by replacing $\mu$ and $\nu$ with empirical measures consisting of i.i.d.\ samples, have performance in high dimensions, with rates of convergence typically of order $n^{-p/d}$~\cite{dudley-convergence,blg-empirical,dobric-asymptotics,FG-rate,ManNil21} when $d > 2p$.
Moreover, minimax lower bounds show that this curse of dimensionality is unavoidable in general~\cite{SP-minimax,NR-stm}.

The existence of the curse of dimensionality for OT has led to a series of proposals to obtain better rates of convergence by imposing additional structural assumptions---such as latent low-dimensionality~\cite{NR-stm} or smoothness~\cite{SinUppLi18,NB-minimax-smooth}---or by replacing $W_p$ by a better-behaved surrogate, such as an entropy-regularized version with much better statistical and computational properties~\cite{cuturi-sinkhorn,genevay-complexity,RW-entropic,MN-entropic,awr-sinkhorn}.

A particularly intriguing option, developed by~\cite{GG-gaussian}, consists in \emph{smoothing} the Wasserstein distance by adding Gaussian noise.
The following result shows the statistical benefits of this approach.

\begin{proposition}[\cite{GGNP-empirical}]
For $d > 1$ and $\sigma > 0$, denote by $\sN_\sigma$ the centered Gaussian measure on $\RR^d$ with covariance $\sigma^2 I_d$. For any compactly supported probability measure $\mu$ in $\RR^d$, let $x_1, x_2, \dots, x_n$ be i.i.d.\ samples from $\mu$, and define the empirical measure
\[\hat{\mu}_n = \frac{1}{n}\sum_{i = 1}^n \delta(x_i).\]
Then there exists a constant $c = c(\mu, \sigma)$ such that
\begin{equation*}
    \EE W_2(\hat{\mu}_n \ast \sN_\sigma, \mu\ast\sN_\sigma) \le cn^{-1/2}.
    \end{equation*}
\end{proposition}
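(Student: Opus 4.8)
The plan is to reduce the claim, via a Benamou--Brenier (equivalently, Peyre-type) inequality, to an $L^2$ estimate on a well-chosen vector field, and then to exploit the i.i.d.\ structure of the samples together with the Gaussian tails that $\sN_\sigma$ forces on that field. Let $\varphi_\sigma$ be the density of $\sN_\sigma$, let $p:=\mu\ast\varphi_\sigma$ and $q:=\hat\mu_n\ast\varphi_\sigma$ be the (smooth, everywhere positive) densities of the smoothed measures, and fix $R$ with $\supp\mu\subseteq B(0,R)$ (so also $\supp\hat\mu_n\subseteq B(0,R)$ almost surely). Feeding the linear interpolation $\rho_t=(1-t)p+tq$ into the Benamou--Brenier formula shows that for \emph{any} vector field $V$ with $\nabla\cdot V=p-q$,
\[
W_2^2(\hat\mu_n\ast\sN_\sigma,\,\mu\ast\sN_\sigma)\ \le\ \int_{\RR^d}\frac{|V(x)|^2}{\min\{p(x),q(x)\}}\,dx\ \le\ \int_{\RR^d}\frac{|V(x)|^2}{\ell(x)}\,dx,
\]
where, since $\mu$ and $\hat\mu_n$ live on $B(0,R)$, one has the deterministic pointwise bound $\min\{p(x),q(x)\}\ge \ell(x):=c_d\,\sigma^{-d}e^{-(|x|+R)^2/(2\sigma^2)}$.

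The crux is to choose $V$ so that $|V|^2/\ell$ is not only integrable but carries the correct power of $n$. The naive choice $V=\nabla(-\Delta)^{-1}(p-q)$ fails: the Newtonian potential of the (essentially compactly supported) signed measure $p-q$ decays only polynomially, so $|V|^2/\ell$ is not integrable against the Gaussian weight $1/\ell$. Instead I would realize $\mu-\hat\mu_n$ as the divergence of a \emph{flow supported in $B(0,R)$} and then convolve with $\varphi_\sigma$. Concretely, write $\mu-\hat\mu_n=(\mu-\delta_0)-\tfrac1n\sum_i(\delta_{x_i}-\delta_0)$, let $\vec{s}^{(a)}$ be the vector measure carried by the segment joining $0$ and $a$ (so $\nabla\cdot\vec{s}^{(a)}=\delta_a-\delta_0$, $\supp\vec{s}^{(a)}\subseteq B(0,R)$ and $|\vec{s}^{(a)}|(\RR^d)\le R$ for $a\in B(0,R)$), and set
\[
\vec\eta\ :=\ \int \vec{s}^{(a)}\,d\mu(a)\ -\ \frac1n\sum_{i=1}^n \vec{s}^{(x_i)},\qquad V\ :=\ \vec\eta\ast\varphi_\sigma .
\]
Then $\nabla\cdot\vec\eta=\mu-\hat\mu_n$, hence $\nabla\cdot V=p-q$ as required; and because $\vec\eta$ is supported in $B(0,R)$ with total variation $O(R)$ for every realization, $V=\vec\eta\ast\varphi_\sigma$ inherits Gaussian tails, $|V(x)|\lesssim_d R\,\sigma^{-d}e^{-(|x|-R)_+^2/(2\sigma^2)}$, which decays faster than $\ell(x)$.

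The $n^{-1}$ gain is built into this decomposition: since $\int\vec{s}^{(a)}\,d\mu(a)=\EE_{a\sim\mu}\vec{s}^{(a)}$, the field is a centered i.i.d.\ average, $V(x)=-\tfrac1n\sum_{i}\big((\vec{s}^{(x_i)}\ast\varphi_\sigma)(x)-\EE_{a\sim\mu}(\vec{s}^{(a)}\ast\varphi_\sigma)(x)\big)$, so $\EE|V(x)|^2=\tfrac1n\,\mathrm{Var}_{a\sim\mu}\big((\vec{s}^{(a)}\ast\varphi_\sigma)(x)\big)\le\tfrac1n\,\EE_{a\sim\mu}|(\vec{s}^{(a)}\ast\varphi_\sigma)(x)|^2\le \tfrac{C_d R^2\sigma^{-2d}}{n}\,e^{-(|x|-R)_+^2/\sigma^2}$, using $|\vec{s}^{(a)}|(\RR^d)\le R$ and $\sup_{z\in B(0,R)}\varphi_\sigma(x-z)\le c_d\sigma^{-d}e^{-(|x|-R)_+^2/(2\sigma^2)}$. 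Plugging this into the Peyre estimate and using Tonelli,
\[
\EE\,W_2^2(\hat\mu_n\ast\sN_\sigma,\mu\ast\sN_\sigma)\ \le\ \int_{\RR^d}\frac{\EE|V(x)|^2}{\ell(x)}\,dx\ \le\ \frac{C_d R^2\sigma^{-d}}{n}\int_{\RR^d}e^{\,-(|x|-R)_+^2/\sigma^2\,+\,(|x|+R)^2/(2\sigma^2)}\,dx,
\]
and the integral is finite because the exponent tends to $-\infty$ quadratically in $|x|$. Calling the resulting constant $c(\mu,\sigma)^2$ and applying Jensen's inequality gives $\EE\,W_2(\hat\mu_n\ast\sN_\sigma,\mu\ast\sN_\sigma)\le c(\mu,\sigma)\,n^{-1/2}$. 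Beyond the choice of $V$, the only steps needing care are the justification of the Benamou--Brenier bound with this smooth, rapidly decaying $V$, and the (elementary, since $\supp\mu$ is bounded) verifications that $p,q\ge\ell$ pointwise almost surely and that $\vec{s}^{(a)}$ has the stated divergence.
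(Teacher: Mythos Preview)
The paper does not prove this proposition at all: it is quoted, with attribution to~\cite{GGNP-empirical}, as background motivating the study of Gaussian-smoothed OT. So there is no ``paper's own proof'' to compare against.

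That said, your argument is correct and self-contained. The Benamou--Brenier/Peyre bound $W_2^2(p,q)\le\int|V|^2/\min(p,q)$ for any $V$ with $\nabla\cdot V=p-q$ is valid here because $p,q$ are smooth, strictly positive, and have finite second moments; your deterministic lower bound $\min(p,q)\ge\ell$ follows immediately from $\supp\mu,\supp\hat\mu_n\subseteq B(0,R)$. The key idea---building $\vec\eta$ from segment measures so that it is supported in $B(0,R)$ \emph{before} convolving with $\varphi_\sigma$---is exactly what makes the scheme work: it forces Gaussian decay on $V$ strong enough to beat the $1/\ell$ weight, while simultaneously exhibiting $V(x)$ as a centered i.i.d.\ average so that $\EE|V(x)|^2=O(1/n)$ pointwise. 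The final integral is finite for the reason you give (the exponent is $-|x|^2/(2\sigma^2)+O(|x|)$ at infinity), and Jensen finishes.

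The route taken in~\cite{GGNP-empirical} is somewhat different in flavor: rather than constructing an explicit vector field, one typically controls $W_2^2$ by a weighted functional of the density difference $p-q$ itself (a weighted $L^2$ or $\chi^2$-type quantity against the Gaussian lower envelope), and then reads off the $n^{-1}$ variance directly from $p-q=(\mu-\hat\mu_n)\ast\varphi_\sigma$ being a centered i.i.d.\ sum of Gaussian bumps. Your segment-measure construction trades that density-level computation for a transport-level one; it is slightly more hands-on but has the virtue of making the Benamou--Brenier step completely explicit and avoiding any appeal to smoothness of optimal potentials or to $\chi^2\!\to\!W_2$ comparison inequalities. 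Both approaches yield a constant of the form $c(\mu,\sigma)=C_d\,\mathrm{poly}(R,\sigma^{-1})\,e^{C R^2/\sigma^2}$.
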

\noindent
\cite{GG-gaussian} call this framework \emph{Gaussian-smoothed optimal transport} (GOT), and follow up work has shown that it possesses significant statistical benefits, with fast rates of convergence and clean limit laws~\cite{ZCR-smoothed,goldfeld-estim-NN,GGK-generative,GolKatNie22}.

To leverage the beneficial properties of the GOT framework, it is necessary to understand how well the smoothed distance $W_2(\mu\ast\sN_\sigma, \nu\ast\sN_\sigma)$ approximates the standard Wasserstein distance $W_2(\mu, \nu)$.
An application of the triangle inequality shows that
\begin{equation}\label{eq:naive_estimate}
|W_2(\mu, \nu) - W_2(\mu\ast\sN_\sigma, \nu\ast\sN_\sigma)| \lesssim \sigma\,.
\end{equation}
Indeed, the triangle inequality implies $|W_2(\mu\ast\sN_\sigma, \nu\ast\sN_\sigma) - W_2(\mu, \nu)| \leq W_2(\mu, \mu \ast\sN_\sigma) + W_2(\nu, \nu \ast \sN_\sigma)$ and the latter two terms are of order at most $\sigma$. In general, this upper bound is unimprovable, as we show below.
On the other hand, it can also be very loose: if $\mu$ is a translation of $\nu$, then $W_2(\mu\ast\sN_\sigma, \nu\ast\sN_\sigma) = W_2(\mu, \nu)$ for all $\sigma \geq 0$.
These examples raise a natural question: how well does $W_2(\mu\ast\sN_\sigma, \nu\ast\sN_\sigma)$ approximate $W_2(\mu, \nu)$ when $\sigma$ is small, and how does the answer to this question depend on the measures $\mu$ and $\nu$?


The main goal if this paper is to give a sharp answer to this question for \emph{finitely supported} measures.
We focus on the finite support case for two reasons.
First, when $\mu$ and $\nu$ are finitely supported, $\mu \ast \sN_\sigma$ and $\nu \ast \sN_\sigma$ are each finite mixtures of Gaussians, and the behavior of Wasserstein distances for such measures is a topic of active research~\cite{doi:10.1137/19M1301047,DBLP:journals/access/ChenGT19}.
Second, as our results indicate, the behavior of this quantity for finitely supported measures is unexpectedly rich, with a sharp dichotomy in rates depending on the structure of the optimal transport plan between $\mu$ and $\nu$:
we show that when the \emph{unique} optimal transport plan between $\mu$ and $\nu$ is a \emph{perfect matching}, then there exist positive $\sigma_*$ and $c$ such that
\begin{equation*}
0 \leq W_2(\mu, \nu) - W_2(\mu\ast\sN_\sigma, \nu\ast\sN_\sigma) \lesssim e^{-c/\sigma^2} \quad \forall \sigma \in (0, \sigma_*)\,.
\end{equation*}
In other words, for sufficiently small $\sigma$, the GOT distance approximates the standard $W_2$ distance exponentially well, substantially sharpening~\eqref{eq:naive_estimate}.
More strikingly, we establish the existence of a phase transition: for $\sigma < \sigma_*$, the gap is exponentially small, whereas for $\sigma > \sigma_*$, the gap scales linearly.
By contrast, if the optimal transport plan between $\mu$ and $\nu$ is not unique or is not a perfect matching, then no phase transition appears: the upper bound of \eqref{eq:naive_estimate} is tight even in a neighborhood of $\sigma = 0$.

To locate exactly where the phase transition happens, we introduce a notion of robustness of the optimal transport plan between $\mu$ and $\nu$, which is motivated by the concept of \textit{cyclical monotonicity}~\cite{rocka1,rocka2,rochet}. 
(See definition in Section~\ref{sec:main}.)
A fundamental result in the theory of optimal transport~\cite{rocka1} is that the support of the optimal transport plan in the definition of $W_2^2$ is cyclically monotone.
We define a robust version of this property and show that it characterizes measures for which the gap between $W_2(\mu, \nu)$ and $W_2(\mu\ast\sN_\sigma, \nu\ast\sN_\sigma)$ is exponentially small.
We show that the critical $\sigma_*$ can be described in terms of the strong convexity of the \emph{potentials} appearing in the dual of the optimal transport problem.
The strong convexity of these potentials has previously been explored in computational and statistical contexts~\cite{vacher2021convex,paty2020regularity}, but to our knowledge its connection to Gaussian smoothed optimal transport is new.

Our work provides a precise understanding on how GOT resembles vanilla OT in the vanishing noise ($\sigma\downarrow 0$) regime.
These results complement those recently obtained by \cite{CN-asymp-large-noise} in the large noise regime, who show that if $\mu$ and $\nu$ have $n$ matching moments, $n \geq 1$, then $W_2(\mu\ast\sN_\sigma, \nu\ast\sN_\sigma) = O(\sigma^{-n})$ as $\sigma\rightarrow\infty$.

Along with results in~\cite{CN-asymp-large-noise}, our work completes the limiting picture of the Euclidean heat semigroup acting on atomic measures under the Wasserstein distance. All the relevant rates are presented in Table~\ref{tab:1}.

We note that our work leaves open the question of characterizing the rates for non-atomic measures.
It is possible to show that, for general measures, there are measures exhibiting polynomial rates intermediate between $\sigma$ and $e^{-c/\sigma^2}$; however, these rates appear to depend delicately on the geometry of the measures and their support.
Giving a full characterization of the rate for general probability measures is an attractive open question.
\begin{table*}[t]
  \centering
\begin{tabular}{ c | c | c | c | c} 
 \hline
 Regime & Condition & $\lim (W_2(\mu\ast\sN_\sigma, \nu\ast\sN_\sigma))$ & Rate & Reference \\
 \hline\hline
 $\sigma \downarrow 0$ & Unique OT plan & $W_2(\mu, \nu)$ & $e^{-c/\sigma^2}$ & Theorem~\ref{thm:main-exp} \\ 
 \hline
 $\sigma \downarrow 0$ & Non-unique OT plan & $W_2(\mu, \nu)$ & $\sigma$ & Theorem~\ref{thm:main-linear} \\ 
 \hline
 $\sigma\uparrow \infty$ & $\mu$ and $\nu$ agree up to $n$th moment & $0$ & $\sigma^{-n}$ & \cite{CN-asymp-large-noise}\\
 \hline
\end{tabular}
  \caption{Limiting behavior of $W_2(\mu\ast\sN_\sigma, \nu\ast\sN_\sigma)$ for atomic measures $\mu$ and $\nu$.}
  \label{tab:1}
\end{table*}

\section{Cyclical monotonicity and implementability}
\label{sec:main}
We are concerned with the optimal transport problem between discrete measures
\[\mu = \sum_{i = 1}^k \alpha_i \delta(x_i), \quad \nu = \sum_{j = 1}^k \alpha_j \delta(y_j)\]
in the space $\RR^d$, equipped with the squared Euclidean cost function $c(x, y) = \|x-y\|^2/2$. (The generalization to discrete measures with different numbers of atoms and weights is considered in Section~\ref{sec:not-perfect-matching}.) We are mainly interested in transport plans in the form of perfect matchings between $\{x_i\}$ and $\{y_i\}$. By relabeling the points, we may assume without loss of generality that the optimal transport plan between $\mu$ and $\nu$ is the unique coupling with support
\[\Gamma = \left\{(x_1, y_1), (x_2, y_2), \dots, (x_k ,y_k)\right\}.\]

Our techniques are based on a robust notion of optimality for $\Gamma$.
We recall the following definition of cyclical monotonicity, which serves as an important certification of an optimal transport plan.
\begin{definition}[See, e.g.,~\cite{rocka1}]
\label{def:cyclical-monotone}
A set $S \subseteq \RR^d \times \RR^d$ is \emph{cyclically monotone} if for any $(a_1, b_1), \dots, (a_n, b_n) \in S$, we have
\begin{equation*}
\sum_{i=1}^n \|a_i - b_i\|^2 \leq \sum_{i=1}^n \|a_i - b_{i+1}\|^2\,,
\end{equation*}
where we set $b_{n+1} := b_1$.
\end{definition}
The significance of this notion is the following fundamental result.
\begin{theorem}[{See~\cite[Theorem 5.10]{Vil08}}]\label{monotone_opt}
If $\pi \in \Pi(\mu, \nu)$ has cyclically monotone support, then it is an optimal transport plan between $\mu$ and $\nu$.
\end{theorem}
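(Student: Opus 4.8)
The plan is to certify optimality of $\pi$ the classical way: from the cyclically monotone support $S := \supp(\pi)$ we build an admissible pair of Kantorovich potentials $(\phi,\psi)$ --- functions satisfying $\phi(x)+\psi(y)\le c(x,y)$ for all $x,y$ --- that are moreover \emph{tight} on $S$, i.e.\ $\phi(x)+\psi(y)=c(x,y)$ whenever $(x,y)\in S$. Granting such a pair, for any competing plan $\pi'\in\Pi(\mu,\nu)$ one has
\[
\int c\, d\pi' \ \ge\ \int \big(\phi(x)+\psi(y)\big)\, d\pi' \ =\ \int\phi\, d\mu+\int\psi\, d\nu \ =\ \int\big(\phi(x)+\psi(y)\big)\, d\pi \ =\ \int c\, d\pi,
\]
where the two middle equalities hold because $\phi$ and $\psi$ each depend on a single variable and $\pi,\pi'$ share the same marginals, and the last equality is tightness on $\supp(\pi)$; since $\pi'$ was arbitrary, $\pi$ is optimal. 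So the whole content is the construction of $(\phi,\psi)$.

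For that I would use Rockafellar's construction~\cite{rocka1}. Fix a base point $(a_0,b_0)\in S$ and define, for $x\in\RR^d$,
\[
\phi(x):=\inf\Big\{\big(c(x,b_n)-c(a_n,b_n)\big)+\big(c(a_n,b_{n-1})-c(a_{n-1},b_{n-1})\big)+\cdots+\big(c(a_1,b_0)-c(a_0,b_0)\big)\Big\},
\]
the infimum over all $n\ge0$ and all finite chains $(a_1,b_1),\dots,(a_n,b_n)\in S$ (for $n=0$ the expression reads $c(x,b_0)-c(a_0,b_0)$). The inequality defining cyclical monotonicity in Definition~\ref{def:cyclical-monotone} is exactly the assertion that closing such a chain back up at $a_0$ cannot produce a negative value, so $\phi(a_0)=0$ and in particular $\phi\not\equiv-\infty$; being an infimum of functions of the form $x\mapsto c(x,b)+\mathrm{const}$, $\phi$ is upper semicontinuous and finite on compact sets. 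Set $\psi(y):=\inf_x\big(c(x,y)-\phi(x)\big)$, the $c$-transform of $\phi$, so that $\phi(x)+\psi(y)\le c(x,y)$ holds by definition. Finally, appending one extra pair $(a,b)\in S$ to a near-optimal chain for $\phi(a)$ yields $\phi(x)\le c(x,b)-c(a,b)+\phi(a)$ for every $x$; rearranging gives $c(x,b)-\phi(x)\ge c(a,b)-\phi(a)$ for all $x$, hence $\psi(b)\ge c(a,b)-\phi(a)$, which together with $\phi(a)+\psi(b)\le c(a,b)$ gives tightness on $S$.

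The only genuine subtleties are technical: one must know $\phi$ is proper, which is precisely what cyclical monotonicity buys, and one must know $\phi,\psi$ are integrable against $\mu,\nu$ so that the displayed chain of (in)equalities is not of the form $\infty-\infty$; for compactly supported measures, and a fortiori in the finitely supported setting that concerns this paper, the latter is automatic since the potentials are bounded on the relevant supports. I expect this integrability bookkeeping to be the only real work --- the combinatorics is entirely contained in the one-line ``append a pair'' argument above. In the discrete case one may alternatively bypass potentials altogether when the weights are uniform: every $\pi'\in\Pi(\mu,\nu)$ is then doubly stochastic, hence by Birkhoff--von Neumann a convex combination of permutation matchings, so it suffices to beat an arbitrary matching $x_i\mapsto y_{\tau(i)}$; decomposing $\tau$ into disjoint cycles and applying the cyclical monotonicity inequality once per cycle, then summing, gives $\sum_i c(x_i,y_i)\le\sum_i c(x_i,y_{\tau(i)})$, which is optimality.
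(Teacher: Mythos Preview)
The paper does not supply its own proof of this theorem; it is quoted as a known result with a reference to \cite[Theorem~5.10]{Vil08}. Your argument via Rockafellar's chain construction of potentials and the standard duality inequality is correct and is essentially the classical proof---indeed, the paper itself invokes the very same construction (borrowed from \cite{rocka1,rocka2,rochet}) in its proof of Proposition~\ref{prop:equiv}, direction (i)$\Rightarrow$(ii), so your approach is entirely consistent with the surrounding material. Your closing remark that in the uniform-weight discrete case one may bypass potentials via Birkhoff--von~Neumann is also correct and is a nice alternative for the only place the paper actually uses Theorem~\ref{monotone_opt} (the proof of Lemma~\ref{lemma:local-no-change}).
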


We strengthen this notion by insisting that the inequalities in the definition of cyclical monotonicity be strict.
\begin{definition}
We say $f: [k]\times [k]\rightarrow \RR_{\ge 0}$ is a positive residual function on $[k]$, if $f(i, i) = 0$, $f(i, j) > 0$ for $i \neq j$, and $f(i, j) = f(j, i)$ for all $i, j\in [k]$.
\end{definition}

\begin{definition}[Strong cyclical monotonicity]
For a positive residual function $f$ on $[k]$, we say that $\Gamma$ is $f$-strongly cyclically monotone, if for any $n\in \NN$ and $\sigma(1), \sigma(2), \dots, \sigma(n) \in [k]$ with $\sigma(i)\neq \sigma(i+1)$ (the convention is $\sigma(n+1) = \sigma(1)$), we have
\begin{equation*}
 \sum_{i = 1}^n \|x_{\sigma(i)} - y_{\sigma(i)}\|^2 
 \le \sum_{i = 1}^n \|x_{\sigma(i)} - y_{\sigma(i+1)}\|^2 - \sum_{i = 1}^n f(\sigma(i), \sigma(i+1)),    
\end{equation*}
or equivalently,
\begin{equation*}
    \sum_{i = 1}^n \la x_{\sigma(i)}, y_{\sigma(i)}-y_{\sigma(i+1)} \ra \ge  \sum_{i = 1}^n f(\sigma(i), \sigma(i+1)).
\end{equation*}
\end{definition}
\noindent
Strong cyclical monotonicity indicates that the optimal plan with support $\Gamma$ is superior to any other plan by a positive margin in its transport cost. In~\cite{rochet}, the author introduced the notion \textit{implementability} and established it as an equivalent condition of cyclical monotonicity. In parallel to the results in~\cite{rochet}, we also consider the following stronger condition of implementability.
\begin{definition}[Strong implementability]
For a positive residual function $f$ on $[k]$, we say that $\Gamma$ is $f$-strongly implementable, if there exists a potential function $\vp$, such that for any $i, j\in [k]$, we have
\begin{equation*}
    \la x_i, y_i - y_j \ra \ge \vp(y_i) - \vp(y_j) + f(i, j).
\end{equation*}
\end{definition}
\noindent
Analogous to the equivalence result in~\cite{rochet}, we show that strong cyclical monotonicity and strong implementability are both equivalent to the uniqueness and optimality of $\Gamma$.
\begin{proposition}\label{prop:equiv}
The following three statements are equivalent:
\begin{itemize}
    \item[(i)] $\Gamma$ is $f$-strongly cyclically monotone for some $f$;
    \item[(ii)] $\Gamma$ is $f$-strongly implementable;
    \item[(iii)] $\Gamma$ is the unique optimal transport plan from $\{x_i\}$ to $\{y_i\}$.
\end{itemize}
\end{proposition}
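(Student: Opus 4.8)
The plan is to prove the implications $(ii)\Rightarrow(i)$, $(i)\Rightarrow(ii)$, $(ii)\Rightarrow(iii)$ and $(iii)\Rightarrow(ii)$, which together give all three equivalences (and incidentally show that the same residual function $f$ may be used in $(i)$ and $(ii)$). The implication $(ii)\Rightarrow(i)$ is immediate: summing the strong implementability inequality over a cyclic sequence $\sigma(1),\dots,\sigma(n)$ makes the potential differences $\vp(y_{\sigma(i)})-\vp(y_{\sigma(i+1)})$ telescope to zero, leaving exactly the inequality defining $f$-strong cyclical monotonicity.

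For $(i)\Rightarrow(ii)$ I would build the potential by a Rockafellar-type shortest-path construction. First, $f$-strong cyclical monotonicity with $n=2$ forces the points $y_i$ to be distinct, so a function of the indices is a function of the $y_i$. Fixing the base index $1$, set
\begin{equation*}
\vp(y_m):=\inf\Big\{\sum_{l=1}^{n-1}\big(\la x_{j_{l+1}},\,y_{j_{l+1}}-y_{j_l}\ra-f(j_{l+1},j_l)\big)\ :\ n\ge1,\ j_1=1,\ j_n=m\Big\},
\end{equation*}
with the empty sum read as $0$. The ``length'' of a closed loop $\sigma$ in this weighted digraph equals $\sum_i\big(\la x_{\sigma(i)},y_{\sigma(i)}-y_{\sigma(i-1)}\ra-f(\sigma(i),\sigma(i-1))\big)$, which is nonnegative by $f$-strong cyclical monotonicity applied to the reversed loop (using symmetry of $f$); hence there are no negative loops, the infimum is attained over the finitely many simple paths, $\vp$ is finite, and $\vp(y_1)=0$. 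Extending a path ending at $j$ by the arc to $i$ then gives $\vp(y_i)\le\vp(y_j)+\la x_i,y_i-y_j\ra-f(i,j)$, i.e.\ $f$-strong implementability with this $\vp$.

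For $(ii)\Rightarrow(iii)$, let $\vp,f$ witness strong implementability. Since $\mu,\nu$ are supported on $\{x_i\},\{y_j\}$, any $\pi\in\Pi(\mu,\nu)$ is a $k\times k$ matrix with row sums $\alpha_i$ and column sums $\alpha_j$, and expanding $\|x-y\|^2=\|x\|^2-2\la x,y\ra+\|y\|^2$ shows its transport cost is a constant depending only on $\mu,\nu$ minus $2\sum_{ij}\pi_{ij}\la x_i,y_j\ra$; thus minimizing cost is the same as maximizing $\sum_{ij}\pi_{ij}\la x_i,y_j\ra$. Plugging in $\la x_i,y_j\ra\le\la x_i,y_i\ra-\vp(y_i)+\vp(y_j)-f(i,j)$ and using the marginal constraints, the $\vp$-terms cancel and one obtains
\begin{equation*}
\sum_{ij}\pi_{ij}\la x_i,y_j\ra\ \le\ \sum_i\alpha_i\la x_i,y_i\ra\ -\ \sum_{ij}\pi_{ij}f(i,j).
\end{equation*}
The first term on the right is the value attained by the $\Gamma$-plan, whose penalty term vanishes; since $f(i,j)>0$ for $i\ne j$, every other plan incurs a strictly positive penalty, so $\Gamma$ is the unique optimizer and hence the unique optimal transport plan.

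The substantive direction is $(iii)\Rightarrow(ii)$. Optimality of $\Gamma$ together with finite linear-programming duality (complementary slackness against $\pi^\Gamma_{ii}=\alpha_i>0$) yields a function $\psi$ on $\{y_i\}$ with $g(i,j):=\la x_i,y_i-y_j\ra-\psi(y_i)+\psi(y_j)\ge0$ for all $i,j$ and $g(i,i)=0$; this is the usual non-strict potential, and the remaining job is to perturb it into a strict one with a uniform margin. Consider the digraph $D$ on $[k]$ with an arc $i\to j$ whenever $i\ne j$ and $g(i,j)=0$. The key claim is that $D$ is acyclic: along a simple cycle the $g$-values sum to $0$ while the $\psi$-terms telescope, so rerouting an infinitesimal amount of mass around that cycle would produce a plan different from $\Gamma$ of the \emph{same} cost, contradicting uniqueness. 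Given acyclicity, fix a topological order of $D$ and take $h$ on $\{y_i\}$ to be a small positive multiple of the associated rank function, scaled so that $h(y_i)<h(y_j)$ along every arc of $D$ and the oscillation of $h$ is smaller than $\min\{g(i,j):i\ne j,\ g(i,j)>0\}$ (a nonempty minimum, again because $D$ is acyclic). Then $h(y_i)-h(y_j)<g(i,j)$ for every $i\ne j$, so setting $\vp:=\psi+h$ and $f(i,j):=\min\{g(i,j)-h(y_i)+h(y_j),\ g(j,i)-h(y_j)+h(y_i)\}$ for $i\ne j$ (and $f(i,i):=0$) produces a positive residual function $f$ with $\la x_i,y_i-y_j\ra\ge\vp(y_i)-\vp(y_j)+f(i,j)$, which is $(ii)$. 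I expect this last step to be the main obstacle: it is the only place where uniqueness, rather than mere optimality, is used, and the perturbation must break all the tight constraints simultaneously without violating the slack ones, which is exactly why the combinatorial structure of $D$ is needed.
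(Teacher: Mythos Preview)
Your proof is correct, and for $(i)\Leftrightarrow(ii)$ and $(ii)\Rightarrow(iii)$ it matches the paper's argument essentially line for line (Rockafellar path construction and a telescoping/LP-duality cost comparison).

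The genuine divergence is in closing the cycle. The paper proves $(iii)\Rightarrow(i)$ directly and very cheaply: since there are only finitely many bijections, the gap $c_1-c_0$ between the optimal cost and the second-best one is strictly positive, and any cycle deviation from $\Gamma$ costs at least $c_1-c_0$; one then simply takes $f(i,j)=\tfrac{\lambda}{2}\|y_i-y_j\|^2$ with $\lambda$ small enough that $\tfrac{\lambda}{2}\sum\|y_{\sigma(i)}-y_{\sigma(i+1)}\|^2\le c_1-c_0$ for every simple cycle (loops with repeats are handled by decomposition). No duality, no graph combinatorics. Your route---LP dual potential, acyclicity of the tight-constraint digraph, topological-rank perturbation---is longer but buys something the paper's argument does not: you produce the strict potential $\vp$ for $(ii)$ directly, as a perturbation of an actual dual optimizer, rather than obtaining it only indirectly via the $(i)\Rightarrow(ii)$ construction. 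Your approach also makes the role of uniqueness more transparent (it is exactly what forbids tight cycles), and it generalizes more readily to settings where the ``finitely many competing permutations'' shortcut is unavailable.
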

\noindent
The positive payment function constructed in the equivalence between (iii) and (i) in Proposition~\ref{prop:equiv} is of the form
$f(i, j) = \frac{\lambda}{2}\|y_i - y_j\|^2$ for some $\lambda > 0$, in which case the implementability condition reads
\[\la x_i, y_i - y_j\ra \ge \vp(y_i) - \vp(y_j) + \frac{\lambda}{2} \|y_i - y_j\|^2.\]
This condition is equivalent to the existence of a $\lambda$-strongly convex potential $\vp$ satisfying $\nabla \vp(y_i) = x_i$ for all $i \in [k]$~\cite{TayHenGli17}, or, equivalently, the existence of a Lipschitz \emph{Brenier map} from $\mu$ to $\nu$~\cite{Bre87}.
More generally, we have the following theorem characterizing the properties of strongly implementable plans with residual functions of quadratic type.

\begin{theorem}
\label{thm:convex-equiv}
The following conditions are equivalent:
\begin{itemize}
    \item [(i)] For some positive numbers $\alpha < \beta$, there exists a potential function $\vp:\RR^d \rightarrow \RR^d$ which is $\alpha$-strongly convex and $\beta$-smooth, such that $x_i = \nabla \vp(y_i)$ for all $i\in [k]$.
    \item [(ii)] $\Gamma$ is strongly implementable for
    \begin{equation}
    \label{cond:str-imp}
        f(i, j) := \frac{1}{2(\beta - \alpha)}\left(\|x_i - x_j\|^2
         + \alpha\beta \|y_i - y_j\|^2 - 2\alpha \la y_i - y_j, x_i - x_j\ra\right),
    \end{equation}
    or equivalently, there exists $\{\tilde{\vp}(y_i)\}_{i = 1}^k \subset \RR^d$, such that for all $i, j\in [k]$ ($i\neq j$),
    \begin{equation}
    \label{cond:str-imp-2}
    \begin{aligned}
        \la x_i, y_i - y_j\ra \ge &\ \tilde{\vp}(y_i) - \tilde{\vp}(y_j) \\
        &+ \frac{1}{2(\beta - \alpha)}\left(\|x_i - x_j\|^2
         + \alpha\beta \|y_i - y_j\|^2 - 2\alpha \la y_i - y_j, x_i - x_j\ra\right)
         \end{aligned}
    \end{equation}
\end{itemize}
\end{theorem}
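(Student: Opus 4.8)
The plan is to read Theorem~\ref{thm:convex-equiv} as a dictionary between the definition of strong implementability and the \emph{smooth strongly convex interpolation theorem} of Taylor, Hendrickx and Glineur~\cite{TayHenGli17}. Recall that this theorem, in expanded form, states: for $0 < \alpha < \beta$, a finite family of triples $\{(y_i, x_i, v_i)\}_{i\in[k]}\subset\RR^d\times\RR^d\times\RR$ extends to some $\alpha$-strongly convex, $\beta$-smooth $\vp:\RR^d\rightarrow\RR$ with $\vp(y_i) = v_i$ and $\nabla\vp(y_i) = x_i$ for every $i$ if and only if
\begin{equation*}
v_i - v_j - \la x_j,\, y_i - y_j\ra \ \ge\ \frac{1}{2(\beta-\alpha)}\big(\|x_i - x_j\|^2 + \alpha\beta\,\|y_i - y_j\|^2 - 2\alpha\la y_i - y_j,\, x_i - x_j\ra\big)
\end{equation*}
for all $i, j\in[k]$. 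The right-hand side here is exactly $f(i,j)$ from~\eqref{cond:str-imp}, so these interpolation inequalities read simply $v_i - v_j - \la x_j, y_i - y_j\ra \ge f(i,j)$.

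With this in hand the translation is immediate. Condition (ii) asks for reals $\{\tilde{\vp}(y_i)\}_{i\in[k]}$ with $\la x_i, y_i - y_j\ra \ge \tilde{\vp}(y_i) - \tilde{\vp}(y_j) + f(i,j)$ for all $i, j$ (this is just the definition of $f$-strong implementability with the $f$ of~\eqref{cond:str-imp}, which is what the ``or equivalently'' in the statement records). Swapping the names of $i$ and $j$ and using $f(j,i) = f(i,j)$, this is the same family of inequalities as $\tilde{\vp}(y_i) - \tilde{\vp}(y_j) - \la x_j, y_i - y_j\ra \ge f(i,j)$, i.e.\ the interpolation inequalities above with $v_i := \tilde{\vp}(y_i)$. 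Hence: if (i) holds, I would take $v_i := \vp(y_i)$ and read off the inequalities (the necessity direction of the interpolation theorem), which after the rearrangement give (ii) with $\tilde{\vp}(y_i) = v_i$; conversely, if (ii) holds, the values $v_i := \tilde{\vp}(y_i)$ satisfy the interpolation inequalities, so by the sufficiency direction there is a global $\alpha$-strongly convex, $\beta$-smooth $\vp$ with $\vp(y_i) = v_i$ and, crucially, $\nabla\vp(y_i) = x_i$, which is (i).

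There is one small thing to verify in order for the phrase ``$f$-strongly implementable'' to be meaningful, namely that the $f$ of~\eqref{cond:str-imp} is a positive residual function. It is manifestly symmetric and vanishes when $i = j$, and completing the square gives
\begin{equation*}
f(i,j) = \frac{1}{2(\beta-\alpha)}\,\big\|x_i - x_j - \alpha(y_i - y_j)\big\|^2 + \frac{\alpha}{2}\,\|y_i - y_j\|^2\,,
\end{equation*}
which is strictly positive whenever $y_i \neq y_j$; since $\Gamma$ is a perfect matching between the (distinct) atoms of $\mu$ and $\nu$, this holds for all $i \neq j$. The hypothesis $\alpha < \beta$ is used both to make the formula for $f$ meaningful and to stay in the regime covered by~\cite{TayHenGli17}.

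There is no real obstacle here beyond bookkeeping: all the content is carried by the interpolation theorem of~\cite{TayHenGli17}, and the work is to line up the variables correctly --- recognizing that the free function values $v_i$ in the interpolation theorem play exactly the role of the existentially quantified potential $\tilde{\vp}$ in the definition of strong implementability, and keeping track of the $i \leftrightarrow j$ swap (legitimate because $f$ is symmetric) that converts the ``interpolation'' form $v_i - v_j \ge \la x_j, y_i - y_j\ra + f(i,j)$ into the ``implementability'' form $\la x_i, y_i - y_j\ra \ge \tilde{\vp}(y_i) - \tilde{\vp}(y_j) + f(i,j)$. The one genuinely computational step, the completion of the square above (equivalently, checking that the standard unexpanded form of the TayHenGli residual equals~\eqref{cond:str-imp}), is routine.
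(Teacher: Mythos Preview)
Your proposal is correct and follows exactly the route the paper takes: the paper's proof is the single line ``This is a direct application of Theorem~4 in~\cite{TayHenGli17},'' and your write-up simply unpacks that application, matching the interpolation values $v_i$ with the potential $\tilde\vp(y_i)$ and verifying the positivity of $f$. Nothing further is needed.
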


\begin{proof}
This is a direct application of Theorem 4 in~\cite{TayHenGli17}.
\end{proof}
\begin{remark}
We should emphasize that the $f$ defined in Theorem~\ref{thm:convex-equiv} is indeed a positive residual function given $\alpha < \beta$, since Cauchy-Schwartz gives
\[2\alpha \la y_i - y_j, x_i - x_j \ra 
\le \|x_i - x_j\|^2 + \alpha^2 \|y_i - y_j\|^2 
< \|x_i - x_j\|^2 + \alpha\beta \|y_i - y_j\|^2.\]
\end{remark}

\noindent
As a direct consequence of the direction (ii) to (i) in Theorem~\ref{thm:convex-equiv}, if $\Gamma$ is strongly implementable for a positive residual function $f$ which is quadratic in $y_i - y_j$ and $x_i - x_j$, we will have guarantee on strong convexity and smoothness of the potential function.
\begin{corollary}
Suppose $\Gamma$ is strongly implementable for 
\begin{equation*}
    f(i, j) = \frac{1}{2}\left(\lambda_{xx}\|x_i - x_j\|^2
         + \lambda_{yy} \|y_i - y_j\|^2 - 2\lambda_{xy} \la y_i - y_j, x_i - x_j\ra\right)
\end{equation*}
where $\lambda_{xx}, \lambda_{xy}$ and $\lambda_{yy}$ are positive numbers which satisfy $\lambda_{xy}^2 + \lambda_{xy} = \lambda_{xx}\lambda_{yy}$. Then there exists a potential function $\vp: \RR^d \rightarrow \RR^d$ which is $\frac{\lambda_{xy}}{\lambda_{xx}}$-strongly convex and $\frac{\lambda_{yy}}{\lambda_{xy}}$-smooth, such that $\nabla x_i = \vp(y_i)$ for all $i\in [k]$. 
\end{corollary}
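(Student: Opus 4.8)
The plan is to deduce this directly from Theorem~\ref{thm:convex-equiv}, namely the implication $(ii)\Rightarrow(i)$, by choosing the parameters $\alpha,\beta$ there so that the quadratic residual function in~\eqref{cond:str-imp} coincides with the $f$ given here. The correspondence is read off by matching the coefficients of $\|x_i-x_j\|^2$, $\|y_i-y_j\|^2$, and $\la y_i-y_j, x_i-x_j\ra$: one is led to set $\alpha := \lambda_{xy}/\lambda_{xx}$ and $\beta := \lambda_{yy}/\lambda_{xy}$, which are precisely the strong-convexity and smoothness constants claimed in the statement.

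The one computation to carry out is the verification that this choice makes the two residual functions literally equal. It suffices to check the single identity $\beta - \alpha = 1/\lambda_{xx}$: indeed $\beta - \alpha = (\lambda_{xx}\lambda_{yy} - \lambda_{xy}^2)/(\lambda_{xx}\lambda_{xy})$, and the hypothesis $\lambda_{xy}^2 + \lambda_{xy} = \lambda_{xx}\lambda_{yy}$ rewrites the numerator as $\lambda_{xy}$, giving $\beta-\alpha = 1/\lambda_{xx}$. Granting this, $\tfrac{1}{2(\beta-\alpha)} = \tfrac{\lambda_{xx}}{2}$, $\tfrac{\alpha\beta}{2(\beta-\alpha)} = \tfrac{\lambda_{yy}}{2}$, and $\tfrac{\alpha}{2(\beta-\alpha)} = \tfrac{\lambda_{xy}}{2}$, so~\eqref{cond:str-imp} is exactly the stated $f$. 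I would also record that $\alpha < \beta$, which is needed to invoke the theorem: since $\lambda_{xy} > 0$, the constraint forces $\lambda_{xy}^2 < \lambda_{xx}\lambda_{yy}$, i.e., $\lambda_{xy}/\lambda_{xx} < \lambda_{yy}/\lambda_{xy}$. (The same inequality, via Cauchy--Schwarz as in the Remark above, also shows the given $f$ is a genuine positive residual function, so the hypothesis is non-vacuous.)

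With $\alpha < \beta$ and $f$ identified with~\eqref{cond:str-imp}, the hypothesis that $\Gamma$ is $f$-strongly implementable is exactly condition $(ii)$ of Theorem~\ref{thm:convex-equiv}, so condition $(i)$ furnishes an $\alpha$-strongly convex, $\beta$-smooth potential $\vp$ with $x_i = \nabla\vp(y_i)$ for all $i\in[k]$, which is the claim. There is no real obstacle here: all the content is the bookkeeping of matching the three quadratic coefficients, and the one nontrivial input — the cancellation $\lambda_{xx}\lambda_{yy} - \lambda_{xy}^2 = \lambda_{xy}$ — is precisely what the constraint $\lambda_{xy}^2+\lambda_{xy} = \lambda_{xx}\lambda_{yy}$ supplies.
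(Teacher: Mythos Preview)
Your proposal is correct and is exactly the approach the paper intends: the paper presents this corollary as ``a direct consequence of the direction (ii) to (i) in Theorem~\ref{thm:convex-equiv}'' without further proof, and your coefficient-matching argument (setting $\alpha=\lambda_{xy}/\lambda_{xx}$, $\beta=\lambda_{yy}/\lambda_{xy}$ and using the constraint to verify $\beta-\alpha=1/\lambda_{xx}$) is precisely the bookkeeping needed to make that reduction explicit.
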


\noindent
A crucial property of strongly cyclical monotone (or strongly implementable) transport plans is that they are robust to small perturbations in the sources and targets. We quantify the robustness of the map $\Gamma$ in the following definition.
\begin{definition}
\label{def:eps-robust}
For $\epsilon \ge 0$, we say $\Gamma$ is $\epsilon$-robust, if for any distinct $\sigma(1), \sigma(2), \dots, \sigma(n) \in [k]$, and any $\alpha_{\sigma(1)}, \alpha_{\sigma(2)}, \dots, \alpha_{\sigma(n)}\in \RR^d$ such that
\[\max_{i} \|\alpha_{\sigma(i)}\| \le \epsilon,\]
there holds
\begin{equation*}
     \sum_{i = 1}^n \|x_{\sigma(i)} - y_{\sigma(i)}\|^2 
     \le \sum_{i = 1}^n \|(x_{\sigma(i)}+\alpha_{\sigma(i)})
     - (y_{\sigma(i+1)} + \alpha_{\sigma(i+1)})\|^2.
\end{equation*}
In the case that $\Gamma$ is an optimal transport plan, also denote
\begin{equation*}
    R(\Gamma) := \sup \left\{\epsilon \ge 0\ :\ \Gamma\ \text{is}\ \epsilon\text{-robust}\right\}.
\end{equation*}
\end{definition}
\noindent
The quantity $R(\Gamma)$, which we call ``robustness of optimality", is crucial to understanding the behavior of the optimal transport cost between $\mu$ and $\nu$ corrupted with noise.
\begin{proposition}\label{prop:equiv-scm}
$\Gamma$ is strongly cyclically monotone if and only if $R(\Gamma)>0$.
\end{proposition}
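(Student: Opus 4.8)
The plan is to route both sides of the equivalence through the finite collection of \emph{simple cycles} in $[k]$, i.e.\ injective sequences $\sigma(1),\dots,\sigma(n)$ traversed cyclically, and to exploit that the perturbed transport cost along such a cycle is a convex quadratic function of the perturbation.

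The first step is to reduce strong cyclical monotonicity to a statement about simple cycles only. For a cyclic sequence $\sigma(1),\dots,\sigma(n)$ with $\sigma(i)\neq\sigma(i+1)$, write
\[
g_\sigma(0)\ :=\ \sum_{i=1}^n\|x_{\sigma(i)}-y_{\sigma(i+1)}\|^2-\sum_{i=1}^n\|x_{\sigma(i)}-y_{\sigma(i)}\|^2 ,
\]
so that ``$\Gamma$ is $f$-strongly cyclically monotone'' is exactly the requirement $g_\sigma(0)\ge\sum_i f(\sigma(i),\sigma(i+1))$ for all such $\sigma$. Both $g_\sigma(0)$ and $\sum_i f(\sigma(i),\sigma(i+1))$ are additive under the standard move of cutting a closed walk at a repeated vertex into two strictly shorter closed walks, and that move preserves the constraint that consecutive entries differ; iterating, every admissible sequence decomposes into simple cycles. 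Hence $\Gamma$ is $f$-strongly cyclically monotone for \emph{some} positive residual $f$ if and only if $g_C(0)>0$ for every simple cycle $C$: one direction is immediate, and for the other the constant choice $f\equiv\delta$ with $\delta:=\min_C\tfrac{g_C(0)}{|C|}$ (a minimum over finitely many simple cycles, hence positive) works.

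Next I would record the quadratic structure. Fixing a simple cycle $C=(\sigma(1),\dots,\sigma(n))$ and perturbations $a=(a_1,\dots,a_n)$, $a_i\in\RR^d$, and expanding squares --- the diagonal perturbations cancel, which is why the unperturbed $g_C(0)$ and not a perturbed quantity appears --- gives
\[
g_C(a)\ :=\ \sum_{i}\|(x_{\sigma(i)}+a_i)-(y_{\sigma(i+1)}+a_{i+1})\|^2-\sum_{i}\|x_{\sigma(i)}-y_{\sigma(i)}\|^2\ =\ g_C(0)+2\sum_i\la c_i,a_i\ra+\sum_i\|a_i-a_{i+1}\|^2 ,
\]
where $c_i:=(x_{\sigma(i)}-x_{\sigma(i-1)})+(y_{\sigma(i)}-y_{\sigma(i+1)})$, and $i$ runs cyclically. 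By Definition~\ref{def:eps-robust}, $\Gamma$ is $\epsilon$-robust precisely when $g_C(a)\ge0$ for every simple cycle $C$ and every $a$ with $\max_i\|a_i\|\le\epsilon$. If $g_C(0)>0$ for all simple $C$, then since the quadratic term is nonnegative $g_C(a)\ge g_C(0)-2\epsilon\sum_i\|c_i\|$, so for $\epsilon$ small enough (below $\min_C g_C(0)$ divided by $2\max_C\sum_i\|c_i\|$) we get $g_C(a)\ge0$ for all simple $C$, i.e.\ $\Gamma$ is $\epsilon$-robust; moreover $g_C(0)>0$ on all simple cycles forces ordinary cyclical monotonicity and hence optimality of $\Gamma$ by Theorem~\ref{monotone_opt}, so $R(\Gamma)$ is defined and $R(\Gamma)\ge\epsilon>0$. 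Conversely, if $R(\Gamma)>0$ then $\Gamma$ is $\epsilon_0$-robust for some $\epsilon_0>0$, which applied with $a=0$ already gives $g_C(0)\ge0$ for every simple cycle; if some simple cycle had $g_C(0)=0$, then $0\le\tfrac{1}{t}g_C(ta)=2\sum_i\la c_i,a_i\ra+t\sum_i\|a_i-a_{i+1}\|^2$ for all small $t>0$ and all $\|a_i\|\le1$, and letting $t\downarrow0$ forces $\sum_i\la c_i,a_i\ra\ge0$ for all such $a$, hence $c_i=0$ for every $i$.

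The remaining step --- which I expect to be the crux --- is to rule out a simple cycle along which all $c_i$ vanish. From $c_i\equiv0$ one telescopes around the cycle to see that $x_{\sigma(i)}-y_{\sigma(i+1)}$ equals a vector $w$ independent of $i$; then $x_{\sigma(i)}-y_{\sigma(i)}=w+d_i$ with $d_i:=y_{\sigma(i+1)}-y_{\sigma(i)}$ and $\sum_i d_i=0$, so $g_C(0)=n\|w\|^2-\sum_i\|w+d_i\|^2=-\sum_i\|d_i\|^2$, which is strictly negative because $y_{\sigma(1)}\neq y_{\sigma(2)}$. This contradicts $g_C(0)\ge0$; hence no simple cycle has $g_C(0)=0$, so $g_C(0)>0$ for every simple cycle, and by the first step $\Gamma$ is strongly cyclically monotone. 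This last computation is where the perfect-matching assumption (distinctness of the support points $y_i$) is genuinely used; the reduction to simple cycles and the quadratic expansion are otherwise routine bookkeeping.
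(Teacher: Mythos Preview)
Your proof is correct. The forward direction is essentially the paper's: expand the perturbed cost quadratically and note that the linear term is dominated by the strictly positive constant term for small $\epsilon$ (your bound via $\sum_i\|c_i\|$, which depends only on differences of the data, is marginally sharper than the paper's $4nM\epsilon$ using absolute norms, but the idea is identical). The reverse direction differs: the paper does not argue directly but shows that $R(\Gamma)>0$ forces $\Gamma$ to be the \emph{unique} optimal plan (via a first-order argument on a hypothetical tied cycle) and then invokes Proposition~\ref{prop:equiv}, (iii)$\Rightarrow$(i), to recover strong cyclical monotonicity; you instead work directly, linearizing at a hypothetical simple cycle with $g_C(0)=0$ to force $c_i\equiv0$ and then computing $g_C(0)=-\sum_i\|y_{\sigma(i+1)}-y_{\sigma(i)}\|^2<0$ for a contradiction. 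Your route is more self-contained and avoids the detour through uniqueness, while the paper's route reuses a structural result of independent interest. Your explicit reduction to simple cycles---together with the additivity of $g_\sigma(0)$ and $\sum_i f(\sigma(i),\sigma(i+1))$ under cycle decomposition---is a clean organizing device that the paper leaves implicit.
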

\noindent
The following proposition quantifies the relation between $R(\Gamma)$ and a positive residual $f$ for which $\Gamma$ is strongly implementable, and provides a certification of a lower bound of $R(\Gamma)$ in $O(k^2)$ time.

\begin{proposition}
\label{prop:lb-eps-1}
Suppose $\Gamma$ is strongly implementable for a positive residual function $f$. Then $T$ is $\epsilon$-robust for 
\begin{equation}
\label{eq:eps-up-bound}
    \epsilon \le \frac{1}{2}\inf_{i\neq j} \frac{f(i, j)}{\|x_i - x_j\| + \|y_i - y_j\|}.
\end{equation}
This implies that
\begin{equation*}
    R(\Gamma) \ge \frac{1}{2}\inf_{i\neq j} \frac{f(i, j)}{\|x_i - x_j\| + \|y_i - y_j\|}.
\end{equation*}
\end{proposition}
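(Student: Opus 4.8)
The plan is to unwind the definitions and reduce the robustness inequality to the strong implementability inequality applied to perturbed points. Fix distinct indices $\sigma(1), \dots, \sigma(n) \in [k]$ and perturbations $\alpha_{\sigma(i)}$ with $\max_i \|\alpha_{\sigma(i)}\| \le \epsilon$. Since $\Gamma$ is $f$-strongly implementable via a potential $\vp$, summing the inequality $\la x_{\sigma(i)}, y_{\sigma(i)} - y_{\sigma(i+1)}\ra \ge \vp(y_{\sigma(i)}) - \vp(y_{\sigma(i+1)}) + f(\sigma(i), \sigma(i+1))$ over $i = 1, \dots, n$ makes the potential telescope, giving
\begin{equation*}
\sum_{i=1}^n \la x_{\sigma(i)}, y_{\sigma(i)} - y_{\sigma(i+1)}\ra \ge \sum_{i=1}^n f(\sigma(i), \sigma(i+1))\,.
\end{equation*}
This is exactly the second (equivalent) form of strong cyclical monotonicity. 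First I would rewrite the robustness inequality in the same inner-product form: expanding squared norms, $\sum_i \|x_{\sigma(i)} - y_{\sigma(i)}\|^2 \le \sum_i \|(x_{\sigma(i)}+\alpha_{\sigma(i)}) - (y_{\sigma(i+1)}+\alpha_{\sigma(i+1)})\|^2$ is equivalent, after cancellation of the $\|x_{\sigma(i)}\|^2$ and (by reindexing) the $\|y_{\sigma(i)}\|^2$ terms, to
\begin{equation*}
\sum_{i=1}^n \la x_{\sigma(i)}, y_{\sigma(i)} - y_{\sigma(i+1)}\ra \ge \sum_{i=1}^n \la \alpha_{\sigma(i)} - \alpha_{\sigma(i+1)}, \text{(linear combination of } x\text{'s and }y\text{'s)}\ra + \text{(quadratic in }\alpha\text{)}\,.
\end{equation*}

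The key step is then to bound the right-hand side error terms. The perturbation-dependent terms, after collecting, amount to a sum of cross terms of the form $\la \alpha_{\sigma(i)}, x_{\sigma(i)} - x_{\sigma(i-1)}\ra$ and $\la \alpha_{\sigma(i)}, y_{\sigma(i+1)} - y_{\sigma(i)}\ra$ (plus a telescoping quadratic $\sum \|\alpha_{\sigma(i)}\|^2 - \|\alpha_{\sigma(i+1)}\|^2 = 0$ and cross terms $\la\alpha_{\sigma(i)}, \alpha_{\sigma(i+1)}\ra$ type contributions that I will handle by the same bound). Using Cauchy--Schwarz and $\|\alpha_{\sigma(i)}\| \le \epsilon$, each such term is bounded in absolute value by $\epsilon(\|x_{\sigma(i)} - x_{\sigma(i-1)}\| + \|y_{\sigma(i+1)} - y_{\sigma(i)}\|)$-type quantities; grouping the contributions attached to each consecutive pair $(\sigma(i), \sigma(i+1))$, the total error is at most $2\epsilon \sum_{i=1}^n (\|x_{\sigma(i)} - x_{\sigma(i+1)}\| + \|y_{\sigma(i)} - y_{\sigma(i+1)}\|)$. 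So it suffices that
\begin{equation*}
2\epsilon \sum_{i=1}^n \bigl(\|x_{\sigma(i)} - x_{\sigma(i+1)}\| + \|y_{\sigma(i)} - y_{\sigma(i+1)}\|\bigr) \le \sum_{i=1}^n f(\sigma(i), \sigma(i+1))\,,
\end{equation*}
which holds term-by-term whenever $2\epsilon \le f(i,j)/(\|x_i - x_j\| + \|y_i - y_j\|)$ for every $i \ne j$, i.e.\ whenever $\epsilon$ satisfies \eqref{eq:eps-up-bound}. The bound on $R(\Gamma)$ then follows immediately from its definition as a supremum.

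The main obstacle is the careful bookkeeping in the cancellation step: one must track exactly how the perturbations $\alpha$ enter after expanding the squared norms and reindexing the sum (the $y$-terms shift index by one, the $\alpha$-terms appear in pairs coupled across consecutive indices), and verify that every surviving cross term can be charged to a consecutive pair $(\sigma(i), \sigma(i+1))$ so that the per-pair comparison with $f(\sigma(i),\sigma(i+1))$ is legitimate and the constant $2$ in \eqref{eq:eps-up-bound} is correct rather than, say, $1$ or $4$. I would also double-check that distinctness of the indices is used only where needed (it guarantees $f(\sigma(i),\sigma(i+1)) > 0$ and lets us invoke implementability for genuinely distinct pairs), and that the degenerate cases $n = 1$ (trivial) and repeated consecutive values are consistent with the hypotheses of the definitions. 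Once the bookkeeping is pinned down, the rest is a routine application of Cauchy--Schwarz and the telescoping identity for $\vp$.
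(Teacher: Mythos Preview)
Your proposal is correct and follows essentially the same route as the paper: expand the squares, rearrange into the equivalent inequality
\[
2\sum_i \la \alpha_{\sigma(i)}, y_{\sigma(i+1)} - y_{\sigma(i)} + x_{\sigma(i-1)} - x_{\sigma(i)}\ra \le 2\sum_i \la x_{\sigma(i)}, y_{\sigma(i)} - y_{\sigma(i+1)}\ra + \sum_i \|\alpha_{\sigma(i)} - \alpha_{\sigma(i+1)}\|^2,
\]
bound the left side by $2\epsilon\sum_i(\|x_{\sigma(i)}-x_{\sigma(i+1)}\|+\|y_{\sigma(i)}-y_{\sigma(i+1)}\|)$ via Cauchy--Schwarz, and bound the right side from below by $\sum_i f(\sigma(i),\sigma(i+1))$ using the telescoped implementability inequality. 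One small clarification: the quadratic in $\alpha$ is exactly $\sum_i\|\alpha_{\sigma(i)}-\alpha_{\sigma(i+1)}\|^2$, which sits on the favorable side and is nonnegative, so you simply drop it rather than bounding the cross terms $\la\alpha_{\sigma(i)},\alpha_{\sigma(i+1)}\ra$ separately; once you see this, the bookkeeping you flag as the main obstacle becomes routine.
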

A special case of Proposition~\ref{prop:lb-eps-1} is when the optimal transport plan is strongly implementable with residual functions of quadratic type. In this case, we are able to derive a simple closed-form lower bound of $R(\Gamma)$.
\begin{proposition}
\label{prop:lb-R}
When the equivalence in Theorem~\ref{thm:convex-equiv} holds, $T$ is $\epsilon$-robust for 
\begin{equation}
\label{cond:eps-robust-cond}
    \epsilon \le \frac{1}{2}\inf_{i\neq j} \frac{\max\left\{\frac{1}{\beta}\|x_i - x_j\|^2 , \alpha \|y_i - y_j\|^2\right\}}{\|x_i - x_j\| + \|y_i - y_j\|}.
\end{equation}
This implies that 
\begin{equation*}
     R(\Gamma)
    \ge \frac{1}{2}\inf_{i\neq j} \frac{\max\left\{\frac{1}{\beta}\|x_i - x_j\|^2 , \alpha \|y_i - y_j\|^2\right\}}{\|x_i - x_j\| + \|y_i - y_j\|}.
\end{equation*}
\end{proposition}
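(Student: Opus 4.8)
The plan is to obtain Proposition~\ref{prop:lb-R} as a corollary of Proposition~\ref{prop:lb-eps-1}, by feeding in the explicit quadratic residual function produced by Theorem~\ref{thm:convex-equiv} and then bounding that residual function below in closed form.

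\textbf{Step 1 (reduce to an estimate on $f$).} When the equivalence in Theorem~\ref{thm:convex-equiv} holds, condition (ii) of that theorem says exactly that $\Gamma$ is strongly implementable for
\[
f(i,j) = \frac{1}{2(\beta-\alpha)}\left(\|x_i - x_j\|^2 + \alpha\beta\|y_i - y_j\|^2 - 2\alpha\la y_i - y_j,\, x_i - x_j\ra\right).
\]
Hence Proposition~\ref{prop:lb-eps-1} applies with this particular $f$, and it only remains to lower bound $f(i,j)$.

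\textbf{Step 2 (the key inequality).} Writing $u := x_i - x_j$ and $v := y_i - y_j$, I claim
\[
f(i,j) \ \geq\ \frac12 \max\!\left\{\frac{1}{\beta}\|u\|^2,\ \alpha\|v\|^2\right\}\qquad (i\neq j).
\]
This follows from two completions of squares. Multiplying the definition of $f$ by $2(\beta-\alpha)>0$,
\[
2(\beta-\alpha)f(i,j) - \frac{\beta-\alpha}{\beta}\|u\|^2 \ =\ \frac{\alpha}{\beta}\|u\|^2 + \alpha\beta\|v\|^2 - 2\alpha\la u, v\ra \ =\ \frac{\alpha}{\beta}\,\|u - \beta v\|^2 \ \geq\ 0,
\]
giving $f(i,j)\ge \tfrac{1}{2\beta}\|u\|^2$; similarly
\[
2(\beta-\alpha)f(i,j) - \alpha(\beta-\alpha)\|v\|^2 \ =\ \|u\|^2 - 2\alpha\la u, v\ra + \alpha^2\|v\|^2 \ =\ \|u - \alpha v\|^2 \ \geq\ 0,
\]
giving $f(i,j)\ge \tfrac{\alpha}{2}\|v\|^2$. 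Taking the larger of the two bounds proves the claim. (In passing, since $\alpha\neq\beta$ the vectors $u-\beta v$ and $u-\alpha v$ cannot vanish simultaneously unless $u=v=0$, so in fact $f(i,j)>0$ for $i\neq j$, recovering the remark after Theorem~\ref{thm:convex-equiv}.)

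\textbf{Step 3 (conclude).} Substituting the lower bound of Step~2 into the $\epsilon$-robustness criterion of Proposition~\ref{prop:lb-eps-1} and taking the infimum over pairs $i\neq j$ yields both displays of Proposition~\ref{prop:lb-R}. I do not expect a substantive obstacle; the only delicate point is the bookkeeping of constants. A completely literal substitution of $f(i,j)\ge \tfrac12\max\{\cdots\}$ into the bound $\epsilon\le \tfrac12\inf_{i\neq j} f(i,j)/(\|x_i-x_j\|+\|y_i-y_j\|)$ of Proposition~\ref{prop:lb-eps-1} gives a constant slightly smaller than the $\tfrac12$ displayed in \eqref{cond:eps-robust-cond}; to reach the stated constant one should instead revisit the short robustness computation underlying Proposition~\ref{prop:lb-eps-1} with this specific quadratic $f$ in hand, where the nonnegative term $\sum_i \|\alpha_{\sigma(i)} - \alpha_{\sigma(i+1)}\|^2$ that is discarded in the general argument can be used to absorb the difference. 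Apart from that, the proof is exactly the two square completions above.
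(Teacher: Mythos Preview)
Your approach is exactly the paper's: invoke Proposition~\ref{prop:lb-eps-1} with the quadratic residual $f$ from Theorem~\ref{thm:convex-equiv} and bound $f$ below. Your two completions of squares are precisely the paper's ``Cauchy--Schwarz with parameter $\lambda$'' specialised to $\lambda=1/\beta$ and $\lambda=1/\alpha$; both yield $f(i,j)\ge\tfrac12\max\{\tfrac{1}{\beta}\|x_i-x_j\|^2,\alpha\|y_i-y_j\|^2\}$.

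You are right that a literal application of Proposition~\ref{prop:lb-eps-1} as stated loses a factor of $2$; indeed the paper's own proof quietly drops the $1/2$ from the definition of $f$ when it writes $\sum_i f=\tfrac{1}{\beta-\alpha}\sum_i(\cdots)$. However, your suggested remedy---using the discarded term $\sum_i\|\alpha_{\sigma(i)}-\alpha_{\sigma(i+1)}\|^2$---does not recover the missing factor, since that term is $O(\epsilon^2)$ and cannot offset a constant multiple of $\sum_i f$. The real slack is elsewhere: in the proof of Proposition~\ref{prop:lb-eps-1}, strong cyclical monotonicity gives $\sum_i\langle x_{\sigma(i)},y_{\sigma(i)}-y_{\sigma(i+1)}\rangle\ge\sum_i f(\sigma(i),\sigma(i+1))$, so the right-hand side of \eqref{cond:eps-robust-simple} is actually at least $2\sum_i f$, not merely $\sum_i f$ as written. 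This means Proposition~\ref{prop:lb-eps-1} in fact holds with $\epsilon\le\inf_{i\neq j}f(i,j)/(\|x_i-x_j\|+\|y_i-y_j\|)$, and combining that with your bound $f\ge\tfrac12\max\{\cdots\}$ gives exactly the constant $\tfrac12$ in \eqref{cond:eps-robust-cond}. With this correction your argument is complete and matches the paper.
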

\begin{remark}
When condition (i) in Theorem~\ref{thm:convex-equiv} holds, $\alpha$-strong convexity and $\beta$-smoothness implies
\[\frac{1}{\beta}\|x_i - x_j\| \le \|y_i - y_j\| \le  \frac{1}{\alpha}\|x_i - x_j\|.\]
Thus the condition~\eqref{cond:eps-robust-cond} may be replaced by the bound
\begin{equation}
\begin{aligned}
    \label{cond:eps-robust-cond-strong}
    \epsilon \le &\ \frac{1}{2}\inf_{i\neq j}\max\left\{\frac{\alpha}{1+\beta}\|x_i - x_j\|, \frac{\alpha}{\beta(1+\alpha)}\|y_i - y_j\|\right\},
    \end{aligned}
\end{equation}
which is easier to verify in practice.
\end{remark}
\noindent


\section{Case I: perfect matching}
Our main results show that the robustness of optimality $R(\Gamma)$ controls the gap between $W_2(\mu, \nu)$ and $W_2(\mu\ast\sN_\sigma, \nu\ast\sN_\sigma)$.
\begin{theorem}
\label{thm:main-exp}
If $\sigma_\ast = R(\Gamma)>0$, then for $\sigma\in (0, \sigma_\ast)$,
\begin{equation*}
    W_2(\mu, \nu) - W_2(\mu\ast\sN_\sigma, \nu\ast\sN_\sigma) \lesssim \sqrt{\sigma_\ast \sigma}e^{-\sigma_\ast^2 / 4\sigma^2}.
\end{equation*}
\end{theorem}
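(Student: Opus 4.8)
The plan is to bound the gap by exhibiting a single, explicit coupling between $\mu \ast \sN_\sigma$ and $\nu \ast \sN_\sigma$ whose transport cost is at most $W_2^2(\mu,\nu)/2$ plus an exponentially small error, and conversely noting that $W_2(\mu \ast \sN_\sigma, \nu \ast \sN_\sigma) \le W_2(\mu,\nu)$ follows from smoothing being a contraction (or from the naive triangle-inequality bound, which already gives the sign of the difference). The key idea for the upper bound on the gap is a \emph{truncation} argument: since $\mu \ast \sN_\sigma = \sum_i \alpha_i \sN(x_i, \sigma^2 I)$, most of the mass of the $i$-th Gaussian sits within radius $\approx C\sigma$ of $x_i$; if we transport the bulk of each Gaussian component $\sN(x_i,\sigma^2 I)$ to the corresponding component $\sN(y_i,\sigma^2I)$ by the translation $x \mapsto x + (y_i - x_i)$ (which is the optimal coupling between those two Gaussians), we pay exactly $\|x_i - y_i\|^2$ per unit mass, summing to $W_2^2(\mu,\nu) = \sum_i \alpha_i \|x_i - y_i\|^2$. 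The defect is the mass that lies far out in the tails, and the point of the robustness hypothesis $\sigma < \sigma_\ast = R(\Gamma)$ is to control what happens to that tail mass.

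First I would set up the Gaussian tail estimate: condition on the "good" event that each sample from $\mu \ast \sN_\sigma$ lands within $\sigma_\ast$ of its center — more precisely, write $\mu \ast \sN_\sigma$ as the law of $x_I + \sigma Z$ where $I \sim \alpha$ and $Z \sim \sN(0,I_d)$, and split according to $\|\sigma Z\| \le \sigma_\ast$ versus $\|\sigma Z\| > \sigma_\ast$. On the good event, use the \emph{shared} noise $Z$ for both measures: couple $x_I + \sigma Z$ with $y_I + \sigma Z$. The defining inequality of $\epsilon$-robustness (Definition~\ref{def:eps-robust}) with $\alpha_{\sigma(i)} = \sigma Z_i$ is exactly the statement that this "matched-translation" plan remains optimal (no cheaper rematching exists) when the perturbations have norm $\le \sigma_\ast$; this is where $R(\Gamma)$ enters, guaranteeing that the conditional transport cost on the good event is still governed by $\sum_i \|x_i - y_i\|^2$ rather than by some cheaper cross-matching that only looks good because of a large tail perturbation. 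On the bad event (probability $\le e^{-\sigma_\ast^2/2\sigma^2}$ by a standard Gaussian norm tail bound, up to polynomial factors), bound the transport cost crudely: couple the two conditional laws however is convenient, incurring cost at most a constant times $\mathrm{diam}(\mathrm{supp}\,\mu \cup \mathrm{supp}\,\nu)^2$ plus the second moment of a truncated Gaussian, i.e.\ $O(1)$ in $\sigma$. Combining, $W_2^2(\mu \ast \sN_\sigma, \nu \ast \sN_\sigma) - W_2^2(\mu,\nu) \lesssim e^{-\sigma_\ast^2/2\sigma^2}$ up to polynomial prefactors in $\sigma$ and $\sigma_\ast$, and then converting the squared-distance gap to a distance gap via $a - b \le (a^2 - b^2)/(a+b)$ (or $\sqrt{a^2} - \sqrt{b^2} \le \sqrt{a^2 - b^2}$ when $b \ge 0$) produces the stated $\sqrt{\sigma_\ast \sigma}\, e^{-\sigma_\ast^2/4\sigma^2}$ bound, the square root halving the exponent and the $\sqrt{\sigma_\ast\sigma}$ absorbing the polynomial factors after relabeling constants.

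The main obstacle, and the step requiring real care, is controlling the transport cost \emph{on the bad event} in a way that does not destroy the exponential rate — a naive bound on a truncated/conditioned Gaussian's second moment could introduce a $\sigma^{-d}$ factor from the normalization, which would still be dominated by $e^{-\sigma_\ast^2/2\sigma^2}$ but must be tracked, and more seriously one must ensure the coupling on the bad event is a genuine coupling of the correct conditional marginals (the conditioning $\|\sigma Z\| > \sigma_\ast$ is the same for both measures since they share $Z$, which is what makes this clean). A secondary subtlety is that $\epsilon$-robustness as stated allows the perturbation only on a \emph{subset} of indices $\sigma(1),\dots,\sigma(n)$ of distinct points; to invoke it for the full rematching problem on the good event one should check that no competing coupling between $\mu\ast\sN_\sigma$ (conditioned) and $\nu\ast\sN_\sigma$ (conditioned) can beat the matched-translation plan, which reduces — via the cyclical-monotonicity characterization and the fact that any coupling decomposes into cycles — to exactly the inequality in Definition~\ref{def:eps-robust}, summed over the cycle. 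I would make that reduction explicit as a lemma before assembling the final estimate.
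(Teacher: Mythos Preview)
Your proposal has the right core insight but assembles it in the wrong direction. The theorem asserts that $W_2(\mu,\nu)-W_2(\mu\ast\sN_\sigma,\nu\ast\sN_\sigma)$ is small, i.e.\ that the smoothed distance is not much \emph{smaller} than the original; this requires a \emph{lower} bound on $W_2(\mu\ast\sN_\sigma,\nu\ast\sN_\sigma)$. Exhibiting any coupling of $\mu\ast\sN_\sigma$ with $\nu\ast\sN_\sigma$---your shared-noise plan on the good event glued to an arbitrary plan on the bad event---can only bound $W_2^2(\mu\ast\sN_\sigma,\nu\ast\sN_\sigma)$ from \emph{above}. Your ``combining'' step implicitly uses $W_2^2\bigl(p\mu_1+(1-p)\mu_2,\,p\nu_1+(1-p)\nu_2\bigr)\ge pW_2^2(\mu_1,\nu_1)+(1-p)W_2^2(\mu_2,\nu_2)$, but the inequality goes the other way (joint convexity of $W_2^2$). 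The line ``$W_2^2(\mu\ast\sN_\sigma,\nu\ast\sN_\sigma)-W_2^2(\mu,\nu)\lesssim e^{-\sigma_\ast^2/2\sigma^2}$'' that you write is the trivial direction (indeed, convolution contracts $W_2$), and your square-root conversion at the end then operates on the wrong sign.

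Your good-event argument---that $\epsilon$-robustness makes the matched-translation plan \emph{optimal} for the truncated problem via cyclical monotonicity---is correct and is exactly the content of Lemma~\ref{lemma:local-no-change}: for any kernel $Q$ supported in $B(0,\sigma_\ast)$ one has $W_2(\mu\ast Q,\nu\ast Q)=W_2(\mu,\nu)$. What is missing is the mechanism to pass from this \emph{equality for a truncated kernel} to an \emph{inequality for the full Gaussian}. The paper does this not by gluing couplings of the smoothed measures, but by the triangle inequality at the level of kernels: set $\tilde{\sN}_\sigma = p\bigl(\sN_\sigma\,\big|\,\|\cdot\|<\sigma_\ast\bigr)+(1-p)\delta_0$, which is supported in $B(0,\sigma_\ast)$; then Lemma~\ref{lemma:local-no-change} gives $W_2(\mu\ast\tilde{\sN}_\sigma,\nu\ast\tilde{\sN}_\sigma)=W_2(\mu,\nu)$, and
\[
\bigl|W_2(\mu\ast\sN_\sigma,\nu\ast\sN_\sigma)-W_2(\mu,\nu)\bigr|
\;\le\; W_2(\mu\ast\sN_\sigma,\mu\ast\tilde{\sN}_\sigma)+W_2(\nu\ast\sN_\sigma,\nu\ast\tilde{\sN}_\sigma)
\;\le\; 2\,W_2(\sN_\sigma,\tilde{\sN}_\sigma),
\]
the last step by contraction of $W_2$ under convolution. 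The right-hand side is then bounded by $\bigl(\EE_{z\sim\sN_\sigma}[\|z\|^2\one_{\|z\|\ge\sigma_\ast}]\bigr)^{1/2}\lesssim \sqrt{\sigma\sigma_\ast}\,e^{-\sigma_\ast^2/4\sigma^2}$, which is your tail estimate. So your ingredients are all present; the fix is to replace the ``split the coupling into good and bad events'' step by this triangle-inequality comparison of $\sN_\sigma$ with its truncation.
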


\noindent
 In the regime where $\sigma$ does not exceed $R(\Gamma)$, the above theorem tells that the GOT distance is an excellent approximation of the OT distance.
 Our second main result is a converse to that statement, showing that if $\sigma$ goes beyond $R(\Gamma)$, we show that the loss $W_2(\mu, \nu) - W_2(\mu\ast \sN_\sigma, \nu\ast\sN_\sigma)$ is linear in $\sigma$.
 We start with the following proposition, which quantifies a ``violation of cyclical monotonicity" under possibly large perturbations in the sources and targets.
\begin{proposition}
\label{prop:max-local-improve}
If $\Gamma$ is an optimal transport plan, for any $M \ge 0$, denote
\begin{equation*}
    G(M) := \sup\left\{ \sum_{i = 1}^n \|x_{\sigma(i)} - y_{\sigma(i)}\|^2
      - \sum_{i = 1}^n \|(x_{\sigma(i)}+\alpha_{\sigma(i)}) - (y_{\sigma(i+1)}+\alpha_{\sigma(i+1)})\|^2\ :
     \|\alpha_{\sigma(i)}\| \le M\right\}
\end{equation*}
Then $G(M)$ is a concave function of $M$ for $M\in [0, +\infty)$.
\end{proposition}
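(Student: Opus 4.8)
The plan is to peel the combinatorial part off the supremum first. Fix $n$ and the sequence $\sigma(1),\dots,\sigma(n)\in[k]$ (with $\sigma(n+1)=\sigma(1)$), and let $g_\sigma(M)$ denote the value of the supremum defining $G(M)$ with this $\sigma$ held fixed, so that the perturbation vector $\alpha=(\alpha_{\sigma(1)},\dots,\alpha_{\sigma(n)})$ is the only free quantity and $G=\sup_\sigma g_\sigma$. Since the $\sigma(i)$ must be distinct there are only finitely many cycles, so this is a finite supremum. I would first show every $g_\sigma$ is concave, and then argue that passing to the supremum over $\sigma$ preserves concavity.

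\emph{Concavity of $g_\sigma$.} Expanding the squared norms, the objective equals $c_\sigma+\ell_\sigma(\alpha)-\sum_{i=1}^n\|\alpha_{\sigma(i)}-\alpha_{\sigma(i+1)}\|^2$, where $c_\sigma:=\sum_{i=1}^n(\|x_{\sigma(i)}-y_{\sigma(i)}\|^2-\|x_{\sigma(i)}-y_{\sigma(i+1)}\|^2)$ is a constant and $\ell_\sigma(\alpha):=-2\sum_{i=1}^n\la x_{\sigma(i)}-y_{\sigma(i+1)},\,\alpha_{\sigma(i)}-\alpha_{\sigma(i+1)}\ra$ is linear in $\alpha$. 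The remaining term is $-\alpha^\top(L\otimes I_d)\alpha$, where $L\succeq0$ is the Laplacian of the cycle graph on $\{1,\dots,n\}$, hence a negative semidefinite quadratic form; so $F_\sigma(\alpha):=c_\sigma+\ell_\sigma(\alpha)-\alpha^\top(L\otimes I_d)\alpha$ is a concave function of $\alpha$. The feasible set $K_M=\{\alpha:\max_i\|\alpha_{\sigma(i)}\|\le M\}$ is convex and scales linearly with $M$ (i.e.\ $K_M=M\cdot K_1$). Hence the standard concavity property of the value function of a concave maximization whose constraint level is the parameter applies: given $M_1,M_2\ge0$, $t\in[0,1]$, and $\alpha^{(j)}\in K_{M_j}$ with $F_\sigma(\alpha^{(j)})\ge g_\sigma(M_j)-\varepsilon$, the point $t\alpha^{(1)}+(1-t)\alpha^{(2)}$ lies in $K_{tM_1+(1-t)M_2}$ and, by concavity of $F_\sigma$, has objective value at least $tF_\sigma(\alpha^{(1)})+(1-t)F_\sigma(\alpha^{(2)})$; letting $\varepsilon\to0$ yields $g_\sigma(tM_1+(1-t)M_2)\ge t\,g_\sigma(M_1)+(1-t)\,g_\sigma(M_2)$.

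\emph{The supremum over $\sigma$ --- the main obstacle.} A pointwise supremum of concave functions is in general not concave, so this step must use more than the concavity of the individual $g_\sigma$; this is where I expect the real work to lie. The route I would pursue is to exhibit $G$ itself as the value of a single concave maximization in which $M$ enters only as the level of a convex constraint, and then re-run the averaging argument above. Substituting $W_v=y_v+\alpha_v$ turns the objective attached to a cycle $C$ into $\sum_{(v,v')\in E(C)}\phi_{v,v'}(W)$ with $\phi_{v,v'}(W)=\|x_v-y_v\|^2-\|(x_v-y_v+W_v)-W_{v'}\|^2$ jointly concave in $W=(W_1,\dots,W_k)$, and
\[
G(M)=\sup\Bigl\{\textstyle\sum_{(v,v')\in E(C)}\phi_{v,v'}(W)\ :\ C\text{ a cycle on }[k],\ W_v\in B(y_v,M)\text{ for all }v\Bigr\},
\]
where the box $\prod_v B(y_v,M)$ is convex and scales linearly with $M$; the only non-convex ingredient that remains is the discrete maximization over the cycle $C$. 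I would try to eliminate it either by relaxing this ``maximum-weight cycle'' subproblem to its circulation/linear-programming relaxation and verifying tightness for the present family of weights (here optimality of $\Gamma$, i.e.\ cyclical monotonicity, should be what makes the relaxation behave), or --- using that we are in the perfect-matching regime --- by first reducing the outer supremum to the $k!$ cyclic relabellings of the full index set, so that all $g_\sigma$ being compared share the ground set $[k]$, and then arguing directly on their upper envelope. The crux is to show the choice of cycle can be relaxed to a convex parameter without changing the value of $G(M)$; once $G$ is written in the form $\sup\{\mathcal F(\theta):\theta\in\mathcal D,\ \mathcal G(\theta)\le M\}$ with $\mathcal D$ convex, $\mathcal F$ concave and $\mathcal G$ convex, concavity of $G$ is immediate from the same argument used for $g_\sigma$.
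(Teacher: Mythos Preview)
Your first step is correct and matches the paper in spirit: you fix a cycle $\sigma$ and show $g_\sigma(M)$ is concave via the value-function argument; the paper instead writes $\alpha=m\beta$ with $\max_i\|\beta_i\|=1$ and notes that for each fixed cycle \emph{and} direction $\beta$ the objective is a downward parabola in the scalar $m$, hence concave. These are equivalent reductions.

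The obstacle you flag at the outer supremum over cycles is genuine---a pointwise supremum of concave functions need not be concave---and the paper's proof does \emph{not} overcome it. The paper passes directly from ``for every $(\sigma,\beta)\in\sI$ the expression is concave in $m$'' to ``therefore $g(m)=\sup_{(\sigma,\beta)\in\sI}(\cdot)$ is concave in $m$,'' which is exactly the inference you (rightly) refuse to make; it is the infimum, not the supremum, of concave functions that is automatically concave. The subsequent step $G(M)=\sup_{m\le M}g(m)$ is harmless (the running maximum of a one-variable concave function is again concave), but the step before it is unjustified as written.

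So your proposal and the paper's proof share the same gap; you have simply been more scrupulous in naming it. Your suggested repairs---relaxing the discrete cycle choice to a convex circulation variable, or restricting to full permutations and analyzing the upper envelope---are plausible directions but are not carried through, and it is not clear either of them closes the gap without further input from the optimal-transport structure of $\Gamma$.
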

\noindent

Note that $G(M)$ vanishes for $M < \sigma_*$. The next theorem shows that as long as $G(M)$ is not negligible for $M \gtrsim \sigma_*$, the approximation loss for $\sigma \geq \sigma_*$ is linear in $\sigma$.

\begin{theorem}
\label{thm:main-linear}
If $\sigma_\ast = R(\Gamma)>0$, then
\begin{equation*}
    W_2^2(\mu, \nu) - W_2^2(\mu\ast \sN_\sigma, \nu\ast \sN_\sigma)
    \gtrsim \sup_{M > \sigma_\ast} e^{-cM^2/\sigma^2} G(M).
\end{equation*}
Here $G(M)$ is defined as in Proposition~\ref{prop:max-local-improve}. In particular, if $G(3 \sigma_*) \gtrsim \sigma_*$, then for $\sigma\in (0, 2\sigma_\ast)$,
\begin{equation*}
    W_2^2(\mu, \nu) - W_2^2(\mu\ast \sN_\sigma, \nu\ast \sN_\sigma)
    \gtrsim \sigma e^{-c\sigma_\ast^2/\sigma^2}.
\end{equation*}
\end{theorem}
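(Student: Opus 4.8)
The plan is to view this as a lower bound on the approximation gap, so the strategy is to produce a cheap competitor coupling between $\mu\ast\sN_\sigma$ and $\nu\ast\sN_\sigma$. Since $\Gamma$ is an optimal perfect matching, $W_2^2(\mu,\nu)=\sum_{i=1}^k\alpha_i\|x_i-y_i\|^2$, and this equals the cost of the coupling $\gamma_0$ that pushes each component $\alpha_i\sN(x_i,\sigma^2 I_d)$ onto $\alpha_i\sN(y_i,\sigma^2 I_d)$ by the translation $z\mapsto z+y_i-x_i$. Thus $\gamma_0$ is already a feasible coupling of $\mu\ast\sN_\sigma$ and $\nu\ast\sN_\sigma$ of cost $W_2^2(\mu,\nu)$, and it will suffice to produce, for each $M>\sigma_\ast$, a coupling $\gamma_1$ with $\cost(\gamma_1)\le W_2^2(\mu,\nu)-p\,G(M)+o(p)$ for some $p\gtrsim e^{-cM^2/\sigma^2}$; feasibility of $\gamma_1$ then yields $W_2^2(\mu,\nu)-W_2^2(\mu\ast\sN_\sigma,\nu\ast\sN_\sigma)\gtrsim e^{-cM^2/\sigma^2}G(M)$, and we take the supremum over $M>\sigma_\ast$. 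The coupling $\gamma_1$ is obtained from $\gamma_0$ by rerouting a thin sliver of mass around the cycle that witnesses $G(M)$.

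In detail, I would fix $M>\sigma_\ast$ and $\delta>0$, pick distinct indices $\sigma(1),\dots,\sigma(n)\in[k]$ and vectors $\alpha_{\sigma(i)}$ with $\|\alpha_{\sigma(i)}\|\le M$ coming within $\delta$ of the supremum defining $G(M)$, and fix a small radius $r>0$. Let $A_i$ be the ball of radius $r$ about $x_{\sigma(i)}+\alpha_{\sigma(i)}$, let $\mu_i$ be $\sN(x_{\sigma(i)},\sigma^2 I_d)$ conditioned to $A_i$, set $\nu_i:=(z\mapsto z+y_{\sigma(i)}-x_{\sigma(i)})_\#\mu_i$, and let $p$ be the minimum over $i$ of the masses $\alpha_{\sigma(i)}\,\sN(x_{\sigma(i)},\sigma^2 I_d)(A_i)$. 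From $\gamma_0$ I remove, for each $i$, the piece $\tilde\gamma_i$ of mass $p$ that $\gamma_0$ uses to transport $p\mu_i$ onto $p\nu_i$; since the $\sigma(i)$ are distinct these pieces sit in different components of $\gamma_0$, so $\gamma_0-\sum_i\tilde\gamma_i\ge 0$. I then add back, for each $i$, an arbitrary coupling $\hat\gamma_i$ of $p\mu_i$ with $p\nu_{i+1}$ (indices read cyclically), and set $\gamma_1:=\gamma_0-\sum_i\tilde\gamma_i+\sum_i\hat\gamma_i$. A cyclic reindexing shows the total source and total target masses removed equal those added, so $\gamma_1$ is again a coupling of $\mu\ast\sN_\sigma$ and $\nu\ast\sN_\sigma$.

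For the cost, $\tilde\gamma_i$ costs exactly $p\|x_{\sigma(i)}-y_{\sigma(i)}\|^2$, while $\hat\gamma_i$, whose marginals live in balls of radius $r$ about $x_{\sigma(i)}+\alpha_{\sigma(i)}$ and $y_{\sigma(i+1)}+\alpha_{\sigma(i+1)}$, costs at most $p\,\|(x_{\sigma(i)}+\alpha_{\sigma(i)})-(y_{\sigma(i+1)}+\alpha_{\sigma(i+1)})\|^2+O(pr)$, the implicit constant depending only on the diameter of the finitely many points involved and on $M$. Summing over $i$ and using that the chosen cycle is $\delta$-near-optimal in $G(M)$ gives $\cost(\gamma_1)\le W_2^2(\mu,\nu)-p\,\bigl(G(M)-\delta-O(r)\bigr)$. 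To lower bound $p$, note that on $A_i$ the density of $\sN(x_{\sigma(i)},\sigma^2 I_d)$ is at least a constant times $\sigma^{-d}e^{-(M+r)^2/2\sigma^2}$, whence $p\gtrsim (r/\sigma)^d e^{-(M+r)^2/2\sigma^2}$ with a constant depending only on $d$ and $\min_i\alpha_i$. Choosing $r$ suitably — a small constant multiple of $\sigma$ will do — makes $O(pr)$ negligible next to $p\,G(M)$ and, after absorbing the polynomial and cross-term factors into the exponent, gives $p\gtrsim e^{-cM^2/\sigma^2}$; letting $\delta\to 0$ then proves the bound with the supremum over $M>\sigma_\ast$. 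For the ``in particular'' statement I would plug $M=3\sigma_\ast$ into this supremum, use $G(3\sigma_\ast)\gtrsim\sigma_\ast$, and note that $\sigma<2\sigma_\ast$ forces $\sigma_\ast\gtrsim\sigma$, so $\sigma_\ast e^{-9c\sigma_\ast^2/\sigma^2}\gtrsim\sigma e^{-c'\sigma_\ast^2/\sigma^2}$; since here $\sigma$ need not be small, one keeps $r$ a sufficiently small fixed constant rather than scaling it with $\sigma$.

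The step I expect to be the main obstacle is the joint bookkeeping of feasibility and cost: one must take $r$ small enough that routing between small balls rather than exact points costs only $O(pr)$ relative to the saving $p\,G(M)$, yet not so small that the tail mass $p$ falls below the target $e^{-cM^2/\sigma^2}$ — this dictates how $r$ scales with $\sigma$, $M$, and $G(M)$, and the estimates must be made uniform over the relevant range of $\sigma$. Conceptually, the cyclic rerouting is exactly the primal shadow of the ``violation of cyclical monotonicity'' quantified by $G(M)$; the cyclic structure is what keeps the marginals of $\gamma_1$ correct, and the distinctness of the indices $\sigma(i)$ is what keeps $\gamma_1$ nonnegative. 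A dual-side argument seems less convenient, since one would then be lower bounding a difference of two suprema.
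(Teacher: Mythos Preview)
Your overall strategy is exactly the paper's: start from the translation coupling $\gamma_0$ and reroute a small amount of mass around the cycle that (nearly) attains $G(M)$, producing a cheaper feasible coupling. The difference is purely in the choice of the ``sliver'' of mass, and that choice matters for the first display.

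You remove and add conditioned Gaussians on balls of radius $r$. Because the added piece $\hat\gamma_i$ couples two conditioned Gaussians whose ball centers and Gaussian centers are offset differently, no translation carries one exactly to the other, and you incur the $O(pr)$ error you note. You then claim that taking $r$ to be a small multiple of $\sigma$ makes $O(pr)$ negligible next to $p\,G(M)$. This is not true uniformly in $M$: for $M$ just above $\sigma_\ast$ one has $G(M)\to 0$ while $\sigma$ need not be small, so $r=c_0\sigma$ can dominate $G(M)$. If instead you shrink $r$ proportionally to $G(M)$ to keep the error under control, the mass bound becomes $p\gtrsim (G(M)/\sigma)^d e^{-cM^2/\sigma^2}$, and the final inequality degrades to $G(M)^{d+1}e^{-cM^2/\sigma^2}$ rather than $G(M)e^{-cM^2/\sigma^2}$. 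So as written the argument does not yield the general bound with a constant independent of $M$.

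The paper avoids this by removing and adding \emph{uniform} measures on balls of radius $\sigma$ rather than conditioned Gaussians. Uniform balls are translation invariant, so the added piece can be taken to be the pushforward of $\Unif(B_{\sigma(i)})$ by $z\mapsto z+(y_{\sigma(i+1)}+\alpha_{\sigma(i+1)})-(x_{\sigma(i)}+\alpha_{\sigma(i)})$; this lands exactly on $\Unif(\hat B_{\sigma(i+1)})$, the marginals telescope cyclically, and the cost of the added piece is \emph{exactly} $m\|(x_{\sigma(i)}+\alpha_{\sigma(i)})-(y_{\sigma(i+1)}+\alpha_{\sigma(i+1)})\|^2$ with no $O(r)$ error. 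One then only has to check that the Gaussian density dominates $m/\Vol(B_{\sigma(i)})$ on each ball, which forces $m=c_d\exp(-(M+\sigma)^2/(2\sigma^2))$. With this choice the cost gap is exactly $m\,G(M)$, and absorbing the $M/\sigma$ cross term into the exponent gives the stated bound. Replacing your conditioned Gaussians by uniform balls is a one-line change and closes the gap.

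For the ``in particular'' clause your direct substitution $M=3\sigma_\ast$ together with $\sigma<2\sigma_\ast\Rightarrow\sigma_\ast\gtrsim\sigma$ is fine and is actually simpler than the paper's route, which instead takes $M=\sigma+\sigma_\ast$ and uses the concavity of $G$ (Proposition~\ref{prop:max-local-improve}) to get $G(\sigma+\sigma_\ast)\ge \frac{G(3\sigma_\ast)}{2\sigma_\ast}\sigma\gtrsim\sigma$. Both yield the same conclusion up to the value of $c$.
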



To prove Theorem~\ref{thm:main-exp}, we need the following lemma, which tells that no loss in $W_2$ is incurred by a local perturbation on $\mu$ and $\nu$. The proof of Lemma~\ref{lemma:local-no-change} can be found in Section~\ref{sec:aux-proofs}.
\begin{lemma}
\label{lemma:local-no-change}
If $\sigma_\ast = R(\Gamma)>0$, then for any measure $Q$ in $\RR^d$ supported on $B(0, \sigma_\ast)$,
\[W_2(\mu, \nu)  = W_2(\mu\ast Q, \nu\ast Q).\]
\end{lemma}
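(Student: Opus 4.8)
The plan is to show directly that any optimal transport plan between $\mu \ast Q$ and $\nu \ast Q$ can be "decoupled" into the pieces coming from $Q$ and the pieces coming from the matching $\Gamma$, and that the $\epsilon$-robustness of $\Gamma$ forbids any cross-mixing that would lower the cost. Concretely, write $\mu \ast Q = \int (\mu \ast \delta_a)\, dQ(a)$ and similarly for $\nu \ast Q$; since $Q$ is supported on $B(0,\sigma_\ast)$, the translated matching $\Gamma_a := \{(x_i + a, y_i + a)\}_{i}$ is still a valid coupling of $\mu \ast \delta_a$ and $\nu \ast \delta_a$. Gluing these over $a \sim Q$ (using the same noise realization on both sides) produces a coupling of $\mu \ast Q$ and $\nu \ast Q$ whose cost is exactly $\sum_i \alpha_i \|x_i - y_i\|^2/2 = W_2^2(\mu,\nu)/2$, since the shared shift $a$ cancels in each term $\|(x_i+a)-(y_i+a)\|^2$. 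This gives the inequality $W_2(\mu \ast Q, \nu \ast Q) \le W_2(\mu,\nu)$ for free, for every $Q$ supported anywhere.

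The substance is the reverse inequality $W_2(\mu \ast Q, \nu\ast Q) \ge W_2(\mu,\nu)$, and this is where $\epsilon$-robustness with $\epsilon = \sigma_\ast$ enters. I would take an optimal coupling $\pi$ between $\mu \ast Q$ and $\nu \ast Q$. Each is a mixture of $k$ translates of a copy of $Q$ (the $i$-th component being $\alpha_i \cdot (\delta_{x_i} \ast Q)$ on the source side, $\alpha_j \cdot (\delta_{y_j} \ast Q)$ on the target side), so $\pi$ decomposes into $k^2$ sub-measures $\pi_{ij}$ of total mass $\beta_{ij}$, with $\sum_j \beta_{ij} = \alpha_i = \sum_i \beta_{ij}$ — i.e. $(\beta_{ij})$ is itself a coupling of the weight vectors. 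For the $\pi_{ij}$ block, a point in its source-marginal has the form $x_i + a$ with $a$ in the support of (a conditional of) $Q$, hence $\|a\| \le \sigma_\ast$, and similarly target points are $y_j + b$ with $\|b\| \le \sigma_\ast$. By convexity of the squared norm (Jensen), the transport cost of block $\pi_{ij}$ is at least $\beta_{ij}\|(x_i + \bar a_{ij}) - (y_j + \bar b_{ij})\|^2/2$ where $\bar a_{ij}, \bar b_{ij}$ are the conditional means, with $\|\bar a_{ij}\|, \|\bar b_{ij}\| \le \sigma_\ast$. Thus $2 W_2^2(\mu\ast Q,\nu\ast Q) \ge \sum_{i,j} \beta_{ij} \|(x_i+\bar a_{ij}) - (y_j + \bar b_{ij})\|^2$. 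It remains to argue that the right-hand side is $\ge \sum_i \alpha_i \|x_i - y_i\|^2 = 2W_2^2(\mu,\nu)$. This is exactly a statement about perturbed couplings of the weight vectors: decompose the coupling $(\beta_{ij})$ of the $\alpha$'s against the diagonal coupling (the one supported on $\Gamma$) into cycles — standard since their difference is a circulation — and on each cycle the $\epsilon$-robustness inequality from Definition~\ref{def:eps-robust} (applied to the vectors $\alpha_{\sigma(i)} := \bar a_{\sigma(i),\sigma(i+1)}$ and $-\bar b$'s, all of norm $\le \sigma_\ast$) gives precisely that rerouting along the cycle does not decrease cost. Summing the cycle inequalities with the appropriate positive multiplicities yields the claim. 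A small technical point: $\epsilon$-robustness as stated uses a single perturbation $\alpha_{\sigma(i)}$ at vertex $\sigma(i)$ appearing in two consecutive terms, so I would need to first symmetrize or average the block-conditional means so that the shift attached to target index $i$ and to source index $i$ can be taken to be the same vector; alternatively one reindexes so that the relevant perturbation at each index is the one dictated by its role in the cycle, which is exactly the form Definition~\ref{def:eps-robust} allows.

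The main obstacle I anticipate is the bookkeeping in the last step: passing from "$\pi$ is an arbitrary optimal coupling of two Gaussian mixtures" to "its induced weight-coupling $(\beta_{ij})$, together with conditional means bounded by $\sigma_\ast$, can be handled by cycle decomposition against $\Gamma$." One has to be careful that the conditional means are genuinely bounded by $\sigma_\ast$ (this uses only $\supp Q \subseteq B(0,\sigma_\ast)$ and convexity, so it is fine even though $\sigma_\ast = R(\Gamma)$ is the supremum and may not itself be a value of $\epsilon$ — but note $R(\Gamma)$-robustness may fail at the endpoint, so I would prove the statement for every $Q$ supported on $\overline{B(0,\epsilon)}$ with $\epsilon < \sigma_\ast$ and then take $\epsilon \uparrow \sigma_\ast$, using continuity of $Q \mapsto W_2(\mu \ast Q, \nu \ast Q)$, or simply note that $\Gamma$ being $R(\Gamma)$-robust does hold when the supremum is attained, which for a finite configuration it is). Modulo that limiting argument and the indexing care in invoking Definition~\ref{def:eps-robust}, the proof is a clean "decouple the noise, use convexity, then cycle-decompose and apply robustness" argument.
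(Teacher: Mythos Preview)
Your upper bound (gluing the same noise realization) is exactly the coupling the paper uses. The divergence is in how the lower bound is obtained. The paper does not analyze an arbitrary optimal plan at all: it observes that the same-noise coupling has support contained in $S=\bigcup_{i}\bigcup_{\|\alpha\|\le\sigma_\ast}\{(x_i+\alpha,\,y_i+\alpha)\}$, notes that Definition~\ref{def:eps-robust} is precisely the statement that $S$ is cyclically monotone, and then invokes Theorem~\ref{monotone_opt} to conclude that this coupling is \emph{optimal} for $\mu\ast Q$ and $\nu\ast Q$. Its cost is $\sum_i\alpha_i\|x_i-y_i\|^2$, and the lemma follows. The missed idea is to certify optimality of your candidate via cyclical monotonicity rather than to lower-bound the cost of a competitor.

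Your alternative route has a genuine gap at the step you label a ``small technical point''. After the block decomposition and Jensen, on each cycle $\sigma(1),\dots,\sigma(n)$ you need
\[
\sum_{l}\bigl\|(x_{\sigma(l)}+\bar a_{l})-(y_{\sigma(l+1)}+\bar b_{l})\bigr\|^2\ \ge\ \sum_{l}\|x_{\sigma(l)}-y_{\sigma(l)}\|^2,
\]
with \emph{edge-indexed} perturbations $\bar a_l=\bar a_{\sigma(l),\sigma(l+1)}$ and $\bar b_l=\bar b_{\sigma(l),\sigma(l+1)}$ that are in general unrelated. Definition~\ref{def:eps-robust} supplies this inequality only with a single \emph{vertex-indexed} perturbation $\alpha_{\sigma(l)}$ playing both the source role in term $l$ and the target role in term $l-1$. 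The edge-indexed minimization has strictly more freedom (each $\bar a_l-\bar b_l$ ranges independently over $B(0,2\sigma_\ast)$), so its minimum can be strictly smaller than the vertex-indexed one; the inequality you need is therefore strictly stronger than $\sigma_\ast$-robustness and is not delivered by Definition~\ref{def:eps-robust}. ``Reindexing'' cannot force $\bar b_{l-1}=\bar a_l$, since these come from different blocks $\pi_{\sigma(l-1),\sigma(l)}$ and $\pi_{\sigma(l),\sigma(l+1)}$ of $\pi$, and a symmetrization/averaging step would need Averaged~$\le$~Original termwise, which is not a consequence of convexity here. The clean fix is exactly the paper's: skip the competitor analysis and use Theorem~\ref{monotone_opt} on the support of the same-noise coupling.
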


\begin{proof}[Proof of Theorem~\ref{thm:main-linear}]
For $M>\sigma_\ast$, pick $\sigma(1), \sigma(2), \dots, \sigma(n) \in [k]$ and $\{\alpha_{\sigma(i)}\}_{i = 1}^n \subset \RR^d$ such that $\|\alpha_{\sigma(i)}\|\le M$ and
\begin{align*}
    \sum_{i = 1}^n \|x_{\sigma(i)} - y_{\sigma(i)}\|^2  - \sum_{i = 1}^n \|(x_{\sigma(i)}+\alpha_{\sigma(i)}) - (y_{\sigma(i+1)}
 + \alpha_{\sigma(i+1)})\|^2 = G(M).
\end{align*}
 For every $i\in [k]$, denote $B_{\sigma(i)}$ the ball centered at $x_{\sigma(i)}+\alpha_{\sigma(i)}$ with radius $\sigma$, and $\hat{B}_{\sigma(i)}$ the ball centered at $y_{\sigma(i)}+\alpha_{\sigma(i)}$ with radius $\sigma$. Also denote
 \begin{itemize}
     \item $\gamma\in \Pi(\mu\ast\sN_\sigma, \nu\ast\sN_\sigma)$ the law of $(X + Z, Y+ Z)$, where $(X, Y) \sim \frac 1k \sum_{i=1}^k \delta(x_i, y_i)$ and $Z \sim \sN_\sigma$ are independent.
     \item $\gamma_{\sigma(i)}\in \Pi (\Unif(B_{\sigma(i)}), \Unif(\hat{B}_{\sigma(i)}))$ the coupling associated with the transport map
     \[x \mapsto x+y_{\sigma(i)} - x_{\sigma(i)};\]
     \item $\tilde{\gamma}_{\sigma(i)}\in \Pi (\Unif(B_{\sigma(i)}), \Unif(\hat{B}_{\sigma(i+1)}))$ the coupling associated with the transport map
     \[x\mapsto  x+y_{\sigma(i+1)} - x_{\sigma(i)};\]
     \item A constant $m = c_d \exp\left(-\frac{(M+\sigma)^2}{2\sigma^2}\right)$, where $c_d$ is a constant only dependent on the dimension $d$.
 \end{itemize}
 Consider the following measure in $\RR^d\times \RR^d$:
 \begin{equation*}
     \tilde{\gamma} := \gamma - m\sum_{i = 1}^n \gamma_{\sigma(i)} + m\sum_{i = 1}^n \tilde{\gamma}_{\sigma(i)}.
 \end{equation*}
 We shall show that $\tilde{\gamma}\in \Pi(\mu\ast\sN_\sigma, \nu\ast\sN_\sigma)$. We first verify that $\tilde{\gamma}$ is a positive measure on $\RR^d\times \RR^d$. In fact, for $x, y\in \RR^d$, 
\begin{equation*}
    \gamma(dx,dy) = \frac{1}{k}\sum_{i = 1}^k \left(\frac{1}{(\sqrt{2\pi}\sigma)^d} e^{-\frac{\|x-x_i\|^2}{2\sigma^2}} dx
    \cdot \delta_{x-x_i+y_i}(dy)\right).
\end{equation*}
 Meanwhile,
 \begin{equation*}
     \left(m\sum_{i = 1}^n \gamma_{\sigma(i)}\right) (dx, dy) = m\sum_{i = 1}^n \left(\frac{\one \{x\in B_{\sigma(i)}\}}{\Vol(B_{\sigma(i)})} dx \cdot \delta_{x-x_{\sigma(i)} + y_{\sigma(i)}}(dy) \right).
 \end{equation*}
 For every $\sigma(i)$ such that $x\in B_{\sigma(i)}$, note that 
 $$\|x-x_{\sigma(i)}\| \le \|x-(x_{\sigma(i)}+\alpha_{\sigma(i)})\| + \|\alpha_{\sigma(i)}\| \le \sigma+M,$$
 hence (with a proper choice of $c_d$)
 \begin{equation*}
      \frac{1}{k}\frac{1}{(\sqrt{2\pi}\sigma)^d} e^{-\frac{\|x-x_{\sigma(i)}\|^2}{2\sigma^2}}
     \ge  \frac{1}{k}\frac{1}{(\sqrt{2\pi}\sigma)^d} e^{-\frac{(M+\sigma)^2}{2\sigma^2}} 
     \ge \frac{m}{\Vol(B_{\sigma(i)})}.
 \end{equation*}
 As a result, $\gamma - m\sum_{i = 1}^n\gamma_{\sigma(i)} \ge 0$, and $\tilde{\gamma}$ is a positive measure. Also note that its first marginal (i.e. the marginal on the first $d$ dimensions) and second marginal (i.e. the marginal on the last $d$ dimensions) agree with the respective marginals of $\gamma$. Thus we conclude that $\tilde{\gamma}\in \Pi(\mu\ast\sN_\sigma, \nu\ast\sN_\sigma)$. Now note that
 \begin{equation*}
 \begin{aligned}
    & \int c(x, y) d\gamma(x, y) - \int c(x, y)d\tilde{\gamma}(x, y)\\
      =&\ m\left(\sum_{i = 1}^n\|x_{\sigma(i)} - y_{\sigma(i)}\|^2 \right. 
      \left. - \sum_{i = 1}^n \|(x_{\sigma(i)}+\alpha_{\sigma(i)}) - (y_{\sigma(i+1)} +\alpha_{\sigma(i+1)})\|^2\right)\\
      =&\ m\cdot G(M).
      \end{aligned}
 \end{equation*}
 In the meantime, 
 \[\int c(x, y)d\gamma(x, y) = \frac{1}{2k}\sum_{i = 1}^k \|x_i - y_i\|^2 = W_2^2(\mu, \nu),\]
 therefore, 
 \begin{equation*}
 \begin{aligned}
     & W_2^2(\mu\ast \sN_\sigma, \nu\ast\sN_\sigma) \\
     \le & \int c(x, y) d\tilde{\gamma}(x, y) \\
     \le &\ W_2^2(\mu, \nu) - G(M)\cdot c_d\exp\left(-\frac{(M+\sigma)^2}{2\sigma^2}\right).
 \end{aligned}
 \end{equation*}
 In particular, choosing $M = \sigma+\sigma_\ast$ yields
 \[W_2^2(\mu, \nu) - W_2^2(\mu\ast\sN_\sigma, \nu\ast\sN_\sigma) \gtrsim G(\sigma+\sigma_\ast) \exp\left(-c\frac{\sigma_\ast^2}{\sigma^2}\right).\]
 The rest follows from the observation that, for $\sigma \in (0, 2\sigma_\ast)$, 
 \[G(\sigma+\sigma_\ast) = G(\sigma+\sigma_\ast) - G(\sigma_\ast) \ge \frac{G(3\sigma_\ast) - G(\sigma_\ast)}{2\sigma_\ast}\cdot \sigma \]
 since $G$ is concave by Proposition~\ref{prop:max-local-improve}.
\end{proof}


\section{Case II: no perfect matching}
\label{sec:not-perfect-matching}
In the case that $R(\Gamma) = 0$, or equivalently by Proposition~\ref{prop:equiv} and  Proposition~\ref{prop:equiv-scm} that the optimal transport map between $\mu$ and $\nu$ is not a perfect matching, Theorems~\ref{thm:main-exp} and~\ref{thm:main-linear} are not applicable. 
In this situation, we are able to show that the approximation error is linear, even in a neighborhood of zero.
In fact, this holds whenever there exists an optimal transport plan between $\mu$ and $\nu$ which is not a perfect matching.

To analyze this case, we generalize our setting to optimal transport problems between two discrete measures that do not necessarily have the same number of atoms, and whose mass may not be evenly distributed:
\begin{equation}
\label{def:general-discrete}
    \mu = \sum_{i = 1}^m \alpha_i \delta(x_i), \quad \nu = \sum_{j = 1}^n \beta_j\delta(y_j)\,.
\end{equation}
Here $\{\alpha_i\}_{i = 1}^m$ and $\{\beta_j\}_{j = 1}^n$ are positive numbers such that $\sum_{i = 1}^m \alpha_i = \sum_{j = 1}^n \beta_j = 1$. For the sake of notational convenience and without loss of generality, we also assume that $\{x_i\}$ and $\{y_j\}$ are all different. We prove the following result.
\begin{theorem}
\label{thm:no-perfect-matching}
For $\mu, \nu$ defined per~\eqref{def:general-discrete}, unless the optimal transport plan $T$ between $\mu$ and $\nu$ is unique and a perfect matching, i.e. $m = n$ and there exists a permutation $p$ on $[m]$ such that $\alpha_i = \beta_{p(i)}$ and $T^{-1}(y_{p(i)}) = \{x_i\}$ for all $i\in [m]$, there exists $c_0 > 0$ such that for $\sigma\in (0, c_0)$, 
\begin{equation*}
    W_2^2(\mu, \nu) - W_2^2(\mu\ast\sN_\sigma, \nu\ast\sN_\sigma) \gtrsim \sigma.
\end{equation*}
\end{theorem}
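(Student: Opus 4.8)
The plan is to adapt the competitor-coupling construction from the proof of Theorem~\ref{thm:main-linear} to the non-matching regime. The essential new feature is that the perturbation needed to expose the slack in cyclical monotonicity can be taken of size $O(\sigma)$ rather than exceeding $\sigma$, so the mass one is able to reroute is a positive \emph{constant} (a dimensional constant times the mass on the split atom) instead of being exponentially small; this is exactly why the gain becomes linear in $\sigma$ near $\sigma = 0$. First I would reduce to a canonical configuration. If the optimal plan is not unique and a perfect matching, then there is an optimal plan $\pi$ whose support graph is not a perfect matching: if the plan is unique this holds by hypothesis, and if it is not unique, the average of two distinct optimal plans is again optimal (the cost is linear in $\pi$) and cannot be supported on a matching, since two distinct matchings disagree at some source, which then splits in the average. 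Hence some atom of $\pi$ is split, and by the $\mu\leftrightarrow\nu$ symmetry of $W_2(\,\cdot\ast\sN_\sigma,\,\cdot\ast\sN_\sigma)$ we may assume it is a split \emph{source}: there exist $x_\ast$ and distinct $y_a\neq y_b$ with $p_a := \pi(x_\ast,y_a) > 0$ and $p_b := \pi(x_\ast,y_b) > 0$.

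Next I take the natural coupling $\gamma\in\Pi(\mu\ast\sN_\sigma,\nu\ast\sN_\sigma)$, the law of $(X+Z,Y+Z)$ with $(X,Y)\sim\pi$ and $Z\sim\sN_\sigma$ independent; since $\pi$ is optimal, $\int c\,d\gamma = W_2^2(\mu,\nu)$, and $\gamma$ dominates the two sub-pieces $\gamma_a$ (the source Gaussian puff at $x_\ast$, scaled by $p_a$, rigidly translated by $y_a - x_\ast$) and $\gamma_b$ (likewise with $y_b$), which have disjoint support. Set $v := y_b - y_a$, $\alpha := \sigma v/\|v\|$ (so $\|\alpha\| = \sigma$), $B^+ := B(x_\ast+\alpha,\sigma)$, $B^- := B(x_\ast-\alpha,\sigma)$, and $m := c_d\min(p_a,p_b)$, where $c_d$ is chosen so that $m/\Vol(B^\pm)$ stays below the Gaussian density on $B^\pm$; this uses only that $\|x - x_\ast\|\le 2\sigma$ on $B^\pm$, giving a density lower bound of order $\sigma^{-d}e^{-2}$, hence $m$ a fixed fraction of $p_a, p_b$ with no exponential loss. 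Now carve the uniform chunk of mass $m$ on $B^+$ out of $\gamma_a$ and the uniform chunk of mass $m$ on $B^-$ out of $\gamma_b$, and reinsert them with targets \emph{swapped}: the $B^+$ chunk rigidly translated onto $B(y_b-\alpha,\sigma)$, and the $B^-$ chunk onto $B(y_a+\alpha,\sigma)$. Call the result $\tilde\gamma$. Then $\tilde\gamma\in\Pi(\mu\ast\sN_\sigma,\nu\ast\sN_\sigma)$: it is nonnegative because $\gamma$ dominates the two removed chunks (they live on the disjoint graphs of $x\mapsto x - x_\ast + y_a$ and $x\mapsto x - x_\ast + y_b$), and its marginals equal those of $\gamma$ because the removed and added chunks share the same $x$-marginals piece by piece and the swap makes their $y$-marginals cancel.

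Finally I compute the cost gain. Every chunk involved is a rigid translation of a uniform ball, so each contributes exactly its mass times $\tfrac12$ the squared displacement of centers, and hence
\[
\int c\,d\gamma - \int c\,d\tilde\gamma = \tfrac m2\bigl(\|x_\ast-y_a\|^2 + \|x_\ast-y_b\|^2 - \|x_\ast-y_b+2\alpha\|^2 - \|x_\ast-y_a-2\alpha\|^2\bigr) = \tfrac m2\bigl(4\la v,\alpha\ra - 8\|\alpha\|^2\bigr) = 2m\sigma\bigl(\|v\| - 2\sigma\bigr),
\]
which is at least $m\|v\|\sigma$ once $\sigma \le \|v\|/4$. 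Since $W_2^2(\mu\ast\sN_\sigma,\nu\ast\sN_\sigma)\le\int c\,d\tilde\gamma$ and $\int c\,d\gamma = W_2^2(\mu,\nu)$, taking $c_0 := \|y_a - y_b\|/4$ yields $W_2^2(\mu,\nu) - W_2^2(\mu\ast\sN_\sigma,\nu\ast\sN_\sigma) \ge c_d\min(p_a,p_b)\,\|y_a - y_b\|\,\sigma$ for all $\sigma\in(0,c_0)$, with $c_0$ and the implied constant depending on $\mu,\nu$; this is the claim.

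The step I expect to be the main obstacle is arranging the rerouting so that the $y$-marginal of $\gamma$ is exactly preserved: the removed and added chunks individually have different $y$-marginals, and only the crossing — swapping which target the two perturbed sub-puffs are sent to — makes the totals cancel. Conceptually this is the degenerate two-cycle $x_\ast\to y_a$, $x_\ast\to y_b$ along which cyclical monotonicity holds with equality, and the entire linear gain comes from perturbing its endpoints in the direction $y_b - y_a$ (the sign of $\la v,\alpha\ra$ is what forces $\alpha\parallel v$). A secondary technical point is pinning down the constant $c_d$ in the domination $\gamma \ge \gamma_a + \gamma_b \ge (\text{removed chunks})$ and carrying through the $\mu\leftrightarrow\nu$ reduction so that a split target is treated on the same footing as a split source.
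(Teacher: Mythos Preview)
Your argument is correct and takes a genuinely different route from the paper. The paper first decomposes $\mu=\hat\mu+\tilde\mu$, $\nu=\hat\nu+\tilde\nu$, isolating the mass on the split atom as $\tilde\mu=2\lambda\delta(x_1)$, $\tilde\nu=\lambda(\delta(y_1)+\delta(y_2))$; it uses the optimality of $\pi$ to get $W_2^2(\mu,\nu)=W_2^2(\hat\mu,\hat\nu)+W_2^2(\tilde\mu,\tilde\nu)$ and subadditivity on the smoothed side, then reduces to a separate lemma (Lemma~\ref{lemma:linear-loss-split}) handling the one-to-two case via a bisecting-hyperplane and half-Gaussian construction. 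You instead stay with the full measures and modify the natural coupling $\gamma$ directly, exactly in the spirit of the proof of Theorem~\ref{thm:main-linear}, by carving two $\sigma$-balls out of the split Gaussian puff (shifted by $\pm\sigma v/\|v\|$ along $v=y_b-y_a$) and swapping their targets; because the perturbation has size $\sigma$ rather than $M>\sigma_\ast$, the reroutable mass $m$ is a fixed dimensional constant times $\min(p_a,p_b)$, and the cost gain $2m\sigma(\|v\|-2\sigma)$ is linear. Your approach is more self-contained (no auxiliary lemma, no hyperplane geometry) and makes transparent that the linear rate is precisely the degenerate two-cycle case of the Theorem~\ref{thm:main-linear} mechanism; the paper's approach is more modular and yields Lemma~\ref{lemma:linear-loss-split} as a standalone statement about one-to-two transport. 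Your reduction step---averaging two distinct optimal plans to produce a non-matching optimal plan---is also slightly more explicit than the paper's, which simply asserts the existence of such a $\pi$.
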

Theorem~\ref{thm:no-perfect-matching} tells that, unless the optimal transport plan between $\mu$ and $\nu$ is unique and a perfect matching, the loss from approximating the OT distance with the GOT distance is at least linear in $\sigma$. To proceed with the proof, we need the following lemma. Its proof can be found in Section~\ref{sec:aux-proofs}.
\begin{lemma}
\label{lemma:linear-loss-split}
Let $x, y_1$ and $y_2$ be different points in $\RR^d$. For $\mu_0 := \delta(x)$ and $\nu_0 := \frac{1}{2}\delta(y_1) + \frac{1}{2}\delta(y_2)$, there exists $c_0 > 0$, such that for $\sigma \in (0, c_0)$, we have
\begin{equation}
    W_2^2(\mu_0, \nu_0) - W_2^2(\mu_0\ast\sN_\sigma, \nu_0\ast\sN_\sigma) \gtrsim \sigma.
\end{equation}
\end{lemma}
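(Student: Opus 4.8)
The plan is to exhibit an explicit competitor coupling for the smoothed problem whose cost beats $W_2^2(\mu_0,\nu_0)$ by a margin of order $\sigma$. Observe first that since $\mu_0$ is a single atom, the only coupling of $\mu_0$ and $\nu_0$ is the one that splits the mass at $x$ equally onto $y_1$ and $y_2$, so $W_2^2(\mu_0,\nu_0)=\tfrac14\|x-y_1\|^2+\tfrac14\|x-y_2\|^2$, while $\mu_0\ast\sN_\sigma=\sN(x,\sigma^2I_d)$ and $\nu_0\ast\sN_\sigma=\tfrac12\sN(y_1,\sigma^2I_d)+\tfrac12\sN(y_2,\sigma^2I_d)$.

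Next I would build the competitor. Set $u:=(y_2-y_1)/\|y_2-y_1\|$ and let $H:=\{z:\la z-x,u\ra\le 0\}$ be the halfspace through $x$ on the $y_1$-side. Split the density $p$ of $\sN(x,\sigma^2I_d)$ as $p=p_1+p_2$ with $p_1=p\,\mathbf{1}_H$ and $p_2=p\,\mathbf{1}_{H^c}$; each has mass $\tfrac12$. Transport $p_1$ optimally onto $\tfrac12\sN(y_1,\sigma^2I_d)$ and $p_2$ optimally onto $\tfrac12\sN(y_2,\sigma^2I_d)$ (the masses match), and take the union of the two sub-couplings; this lies in $\Pi(\mu_0\ast\sN_\sigma,\nu_0\ast\sN_\sigma)$, so its cost upper bounds $W_2^2(\mu_0\ast\sN_\sigma,\nu_0\ast\sN_\sigma)$. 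Writing $\hat p_j:=2p_j$ for the normalized truncated Gaussian and using the (elementary, since both marginals will be centered) mean–variance decomposition of the quadratic transport cost,
\[
W_2^2(\hat p_j,\sN(y_j,\sigma^2I_d))=\tfrac12\|\bar{\hat p}_j-y_j\|^2+W_2^2(\hat p_j-\bar{\hat p}_j,\ \sN(0,\sigma^2I_d)),
\]
where $\bar{\hat p}_1=x-\kappa\sigma u$, $\bar{\hat p}_2=x+\kappa\sigma u$, and $\kappa=\sqrt{2/\pi}$ is the mean of the standard half-normal. The centered residual $\hat p_j-\bar{\hat p}_j$ is, up to the dilation by $\sigma$, a fixed centered probability measure depending only on $d$, and a crude independent-coupling estimate bounds each residual transport cost by $d\sigma^2$. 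Hence the competitor cost is at most $\tfrac14\|x-y_1-\kappa\sigma u\|^2+\tfrac14\|x-y_2+\kappa\sigma u\|^2+d\sigma^2$.

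Finally I would subtract and use the choice of direction. From $\|a\|^2-\|a-\kappa\sigma u\|^2=2\kappa\sigma\la a,u\ra-\kappa^2\sigma^2$ one gets
\[
W_2^2(\mu_0,\nu_0)-W_2^2(\mu_0\ast\sN_\sigma,\nu_0\ast\sN_\sigma)\ \ge\ \tfrac{\kappa\sigma}{2}\la y_2-y_1,u\ra-\tfrac{\kappa^2\sigma^2}{2}-d\sigma^2\ =\ \tfrac{\kappa}{2}\|y_2-y_1\|\,\sigma-O_d(\sigma^2),
\]
using $\la y_2-y_1,u\ra=\|y_2-y_1\|>0$. Since $\|y_2-y_1\|$ is a fixed positive constant, taking $c_0$ small enough (depending only on $d$ and $\|y_2-y_1\|$) makes the linear term dominate on $(0,c_0)$, which yields the claimed bound $\gtrsim\sigma$.

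\textbf{Main obstacle.} The only non-bookkeeping point is verifying that the residual term $W_2^2(\hat p_j-\bar{\hat p}_j,\sN(0,\sigma^2I_d))$ is genuinely $o(\sigma)$, so that the $\Theta(\sigma)$ gain from displacing the conditional means of the two half-Gaussians toward $y_1$ and $y_2$ is not cancelled; this is handled by the scaling identity $\hat p_j-\bar{\hat p}_j=\sigma\cdot(\text{fixed law})$ together with a finite-second-moment bound, giving the required $O(\sigma^2)$. One also checks the competitor coupling is admissible, which is just a matter of matching masses on the two halfspace pieces.
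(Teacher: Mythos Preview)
Your argument is correct, and it follows a genuinely different (and in some ways cleaner) route than the paper's proof.

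\textbf{What the paper does.} The paper chooses as splitting direction the unit normal $\mathbf m$ to the \emph{angle bisector} of $\angle y_1xy_2$, and it splits \emph{both} the source Gaussian at $x$ and the two target Gaussians at $y_1,y_2$ into matching half-Gaussians. This yields four sub-transports; two are pure translations, and the other two use the explicit reflection map $x+t\mapsto y_j-t$. The linear gain comes from the cross term $\la \mathbf m, y_1-y_2\ra$ produced by the reflection pieces. Because the angle bisector is degenerate when $x,y_1,y_2$ are collinear with one $y_j$ between $x$ and the other, the paper has to treat that configuration separately.

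\textbf{What you do differently.} You split only the source, along $u=(y_2-y_1)/\|y_2-y_1\|$, and send each half to a \emph{full} target Gaussian, then invoke the mean--variance decomposition $W_2^2(\mu,\nu)=\|m_\mu-m_\nu\|^2+W_2^2(\mu-m_\mu,\nu-m_\nu)$ to isolate the $\Theta(\sigma)$ shift of the conditional means. The $x$–dependent linear terms cancel exactly, leaving $\kappa\sigma\|y_2-y_1\|$ (up to your normalization), so the argument works uniformly for every configuration of $x,y_1,y_2$ with $y_1\neq y_2$ and no case split is needed. The residual $W_2^2$ between the centered half-Gaussian and $\sN(0,\sigma^2I_d)$ is indeed $O_d(\sigma^2)$ by scaling; your independent-coupling bound actually gives $\le (2d-2/\pi)\sigma^2$ rather than $d\sigma^2$, but this is irrelevant for the $\gtrsim\sigma$ conclusion. (Also note your factor $\tfrac12$ in the mean--variance identity and the $\tfrac14$'s in $W_2^2(\mu_0,\nu_0)$ correspond to the cost $c(x,y)=\tfrac12\|x-y\|^2$; the paper's stated definition of $W_2^2$ has no $\tfrac12$, but this only changes constants.)

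\textbf{Trade-offs.} Your approach is shorter, uses a single well-defined direction, and avoids the paper's collinear edge case. The paper's reflection construction is more hands-on and does not rely on the mean--variance decomposition of $W_2^2$, but at the price of four sub-couplings and a separate degenerate case.
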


\begin{proof}[Proof of Theorem~\ref{thm:no-perfect-matching}]
Suppose that there exists a transport plan $\pi$ between $\mu$ and $\nu$ which achieves the optimal cost and is not a perfect matching.
Without loss of generality we assume that $(x_1, y_1), (x_1, y_2) \in \mathrm{supp}(\pi)$.
Let $\lambda = \min\{\pi(x_1, y_1), \pi(x_1, y_2)\}$. We decompose $\mu$ and $\nu$ as
\begin{equation*}
\begin{aligned}
  \hat{\mu} = \mu - 2\lambda\delta(x_1),& \quad \tilde{\mu} = 2\lambda \delta(x_1), \\  
    \hat{\nu} = \nu - \lambda\left(\delta(y_1) + \delta(y_2)\right), &\quad \tilde{\nu} = \lambda\left(\delta(y_1) + \delta(y_2)\right).
\end{aligned}
 \end{equation*}
By Lemma~\ref{lemma:linear-loss-split}, there exists $c_0 >0$ such that for $\sigma\in (0, c_0)$,
\[W_2^2(\tilde{\mu}, \tilde{\nu}) - W_2^2(\tilde{\mu}\ast\sN_\sigma, \tilde{\nu}\ast\sN_\sigma) \gtrsim \sigma.\]
Therefore, for $\sigma \in (0, c_0)$, we also have
\begin{equation*}
    \begin{aligned}
     &W_2^2(\mu, \nu) - W_2^2(\mu\ast\sN_\sigma, \nu\ast\sN_\sigma)\\
     \ge\ & W_2^2(\hat{\mu}, \hat{\nu}) - W_2^2(\hat{\mu}\ast\sN_\sigma, \hat{\nu}\ast\sN_\sigma)
      + W_2^2(\tilde{\mu}, \tilde{\nu}) - W_2^2(\tilde{\mu}\ast\sN_\sigma, \tilde{\nu}\ast\sN_\sigma)\\
       \ge\ & W_2^2(\tilde{\mu}, \tilde{\nu}) - W_2^2(\tilde{\mu}\ast\sN_\sigma, \tilde{\nu}\ast\sN_\sigma)\\
      \gtrsim\ & \sigma,
    \end{aligned}
\end{equation*}
where the first inequality uses that $W_2^2(\mu, \nu) = W_2^2(\hat \mu, \hat \nu) + W_2^2(\tilde \mu, \tilde \nu)$ by the optimality of $\pi$.
\end{proof}
We conclude that, for general discrete measures defined per~\eqref{def:general-discrete}, a phase transition in $W_2(\mu, \nu) - W_2(\mu\ast\sN_\sigma, \nu\ast\sN_\sigma)$ only happens when the optimal transport plan between $\mu$ and $\nu$ is unique and a perfect matching with a positive $R(\Gamma)$. Otherwise, one would always suffer a linear loss in approximating the OT distance with the GOT distance.


\section{Numerical example}
In this section, we present a numerical example to demonstrate different regimes of the rate $W_2(\mu, \nu) - W_2(\mu\ast\sN_\sigma, \nu\ast\sN_\sigma)$, in respect of Theorem~\ref{thm:main-exp} and Theorem~\ref{thm:main-linear}. For the sake of clarity, we consider atomic measures $\mu$ and $\nu$ both defined on $\RR^2$. One of the simplest cases where a coupling $\Gamma$ has $R(\Gamma) = 0$ is
\begin{align*}
   \mu & = \frac{1}{2}\left[\delta((-1, -1)) + \delta((1, 1))\right] ,\\
 \nu &= \frac{1}{2}\left[\delta((-1, 1)) + \delta((1, -1))\right] 
\end{align*}
It is easy to see that the optimal transport plan from $\mu$ to $\nu$ is not unique, which is also a consequence of Proposition~\ref{prop:equiv},  Proposition~\ref{prop:equiv-scm} and the fact that $R(\Gamma) = 0$ for the map
\[\Gamma = \left\{((-1,-1), (-1,1)), ((1,1), (1,-1))\right\}\]
that achieves the optimal cost. We also consider the family 
\[\mu_k = \frac{1}{2}\left[\delta((-1, -1+\frac{k}{10})) + \delta((1, 1-\frac{k}{10}))\right],\  k\in [4]\]
The source and target distributions are demonstrated in Figure 2. For each $k$, the unique optimal transport plan from $\mu_k$ to $\nu$ is given by
\[\Gamma_k = \left\{((-1, -1+\frac{k}{10}), (-1, 1)), ((1, 1-\frac{k}{10}), (1, -1))\right\}.\]
For each of these GOT tasks, we draw $500$ samples from the source distribution $\mu_k\ast\sN_\sigma$ and target distribution $\nu\ast\sN_\sigma$, and use the empirical $W_2$ distance as an estimate of the true $W_2(\mu_k\ast\sN_\sigma, \nu\ast\sN_\sigma)$. We repeat the process $20$ times and report the mean, as shown in the following figure. \\

\graphicspath{ {./} }

\begin{figure}[!htb]
    \centering
    \begin{minipage}{0.5\textwidth}
        \centering
        \includegraphics[width=0.95\linewidth, height=0.17\textheight]{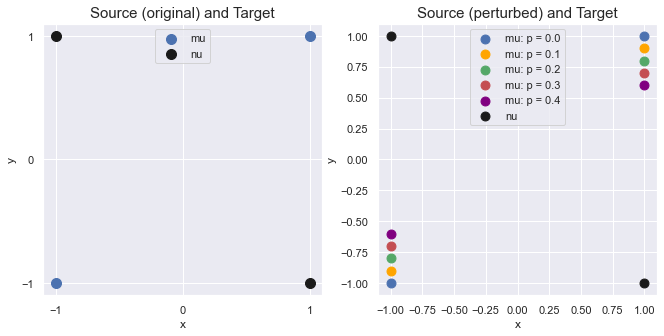}
        \caption{Source and Target distributions}
        \label{fig:distr}
    \end{minipage}%
    \begin{minipage}{0.5\textwidth}
        \centering
        \includegraphics[width=0.95\linewidth, height=0.15\textheight]{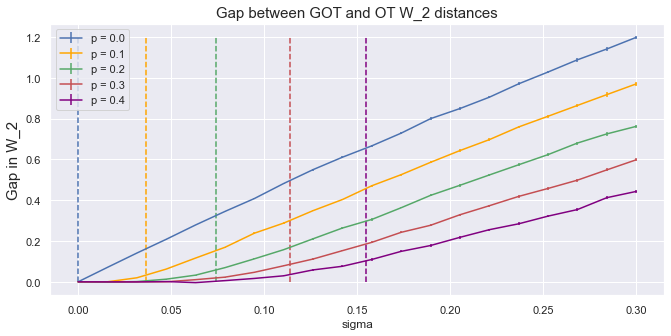}
        \caption{Rate of $W_2(\mu, \nu) - W_2(\mu\ast\sN_\sigma, \nu\ast\sN_\sigma)$ in the vanishing $\sigma$ regime.}
        \label{fig:rate}
    \end{minipage}
\end{figure}

By Theorem~\ref{thm:main-exp} and Theorem~\ref{thm:main-linear}, we expect $W_2^2(\mu_k, \nu) - W_2^2(\mu_k\ast\sN_\sigma, \nu\ast\sN_\sigma)$ to be of scale $e^{-c/\sigma^2}$ for $\sigma\in (0, R(\Gamma_k))$, and $W_2^2(\mu_k, \nu) - W_2^2(\mu_k\ast\sN_\sigma, \nu\ast\sN_\sigma) \gtrsim \sigma$ for $\sigma \ge R(\Gamma_k)$. The phase transition happening at the vertical dashed line (here $R(\Gamma_k)$ is evaluated per Proposition~\ref{prop:lb-R}) is indeed observed in the above experiment.

\section{Auxiliary proofs}
\label{sec:aux-proofs}
In this section, we provide proofs for Proposition~\ref{prop:equiv}, Proposition~\ref{prop:equiv-scm}, Proposition~\ref{prop:lb-eps-1}, Proposition~\ref{prop:lb-R}, Proposition~\ref{prop:max-local-improve}, Lemma~\ref{lemma:local-no-change} and Lemma~\ref{lemma:linear-loss-split}.
\begin{proof}[Proof of Proposition~\ref{prop:equiv}]
(i) to (ii). The idea is borrowed from~\cite{rocka1,rocka2,rochet}. Suppose $\Gamma$ is $f$-strongly cyclically monotone for a positive residual function $f$.
For $i\in [k]$, denote
\begin{equation*}
    v_i :=  \inf_{\substack{\theta(1) = 1, \theta(n+1) = i, \\\theta(2),\dots,\theta(n) \in [k],\\ \theta(s) \neq \theta(s+1)}} 
    \left(\sum_{s = 1}^n \la x_{\theta(s)}, y_{\theta(s)} - y_{\theta(s+1)}\ra 
     - \sum_{s = 1}^n f(\theta(s), \theta(s+1)) \right)
\end{equation*}
By the $f$-strong cyclical monotonicity, we have $v_1\ge 0$. Furthermore, for $i > 1$ and any sequence $\{\theta(s)\}$ with $\theta(1) = 1$, $\theta(n+1) = i$ and $\theta(s)\neq\theta(s+1)$, there holds
\begin{align*}
   \sum_{s = 1}^n \la x_{\theta(s)}, y_{\theta(s)} - y_{\theta(s+1)}\ra  + \la x_i, y_i - y_1\ra 
  \ge\sum_{s = 1}^n f(\theta(s), \theta(s+1)) +  f(i, 1)  
\end{align*}
and it follows that
\[v_i \ge  f(i, 1) - \la x_i, y_i-y_1\ra > -\infty.\]
For any $j\neq i$ and any fixed $\epsilon > 0$, there exists a sequence $\{\theta(s)\}$ with $\theta(1) = 1$, $\theta(n+1) = i$ and $\theta(s)\neq\theta(s+1)$, such that
\begin{equation}\label{ineq-vi}
    \sum_{s = 1}^n \la x_{\theta(s)}, y_{\theta(s)} - y_{\theta(s+1)}\ra - \sum_{s = 1}^n f(\theta(s), \theta(s+1)) \le v_i+\epsilon.
\end{equation}
Consider the same $\{\theta(s)\}$ with one more term $\theta(n+2) := j$. By definition of $v_j$ we have
\begin{equation}\label{ineq-vj}
    v_j \le  \sum_{s = 1}^n \la x_{\theta(s)}, y_{\theta(s)} - y_{\theta(s+1)}\ra + \la x_i, y_i-y_j \ra 
    - \sum_{s = 1}^{n+1} f(\theta(s), \theta(s+1))
\end{equation}
Comparing~\eqref{ineq-vi} and~\eqref{ineq-vj} we get
\begin{equation}\label{ineq_vivj}
    v_j \le v_i + \la x_i, y_i-y_j\ra - f(i, j) +\epsilon
\end{equation}
We set $\vp(x_i) = -v_i$. Letting $\epsilon\downarrow 0$ in~\eqref{ineq_vivj} yields
\begin{equation*}
    \la x_i, y_i - y_j \ra \ge \vp(x_i) - \vp(x_j) + f(i, j).
\end{equation*}
Hence $\Gamma$ is $f$-strongly implementable.\\
\noindent
(ii) to (iii). We prove by contradiction. Suppose $\Gamma$ is not the unique optimal transport plan; this means either $\Gamma$ is not optimal or there exists a different coupling $\Gamma'$ with the same cost. Either case, there exists a sequence $\{\theta(s)\}_{s = 1}^n$ such that
\[\sum_{s = 1}^n \|x_{\theta(s)}- y_{\theta(s)}\|^2 \ge \sum_{s = 1}^n \|x_{\theta(s)}- y_{\theta(s+1)}\|^2\]
Summing over $s$, we get
\begin{equation*}
    \begin{aligned}
     \sum_{s = 1}^n f(\theta(s), \theta(s+1)) 
     \le & \sum_{s = 1}^n \la x_{\theta(s)}, y_{\theta(s)} - y_{\theta(s+1)} \ra\\
     =&\ \frac{1}{2}\left(\sum_{s = 1}^n \|x_{\theta(s)}- y_{\theta(s+1)}\|^2 - \sum_{s = 1}^n \|x_{\theta(s)} - y_{\theta(s)}\|^2\right)\\
     \le&\ 0,
    \end{aligned}
\end{equation*}
a contradiction.\\
\noindent
(iii) to (i). Suppose $\Gamma$ is the unique optimal transport plan from $\{x_i\}$ to $\{y_i\}$. Denote $c_0$ the transport cost of $\Gamma$. For any other transport plan in the form of a bijection between $\{x_i\}$ and $\{y_i\}$, denote $c_1$ the minimum among their costs, then $c_1 > c_0$. Choose a small enough $\lambda > 0$, such that for any choice of $\sigma(1), \sigma(2), \dots, \sigma(n)\in [k]$ with no duplicates, there holds
\[\frac{\lambda}{2}\sum_{i = 1}^n \|y_{\sigma(i)} - y_{\sigma(i+1)}\|^2 \le c_1 - c_0.\]
Now for $f(i, j) = \frac{\lambda}{2}\|y_i - y_j\|^2$ we have
\begin{align*}
   \sum_{i = 1}^n \|x_{\sigma(i)} - y_{\sigma(i+1)}\|^2 - \sum_{i = 1}^n \|x_{\sigma(i)}- y_{\sigma(i)}\|^2
\ge c_1 - c_0 \ge \sum_{i = 1}^n f(\sigma(i), \sigma(i+1)). 
\end{align*}
If there are duplicates in $(\sigma(1), \sigma(2), \dots, \sigma(n))$, we break the loop $\sigma(1)\rightarrow\sigma(2)\rightarrow\dots\rightarrow\sigma(n)\rightarrow\sigma(1)$ into separate loops without duplicates, apply the above inequality to each loop and sum them up. We conclude by definition that $\Gamma$ is $f$-strongly cyclically monotone.
\end{proof}

\begin{proof}[Proof of Proposition~\ref{prop:equiv-scm}]
Suppose $\Gamma$ is $f$-strongly cyclically monotone for some positive residual $f$. Denote 
\[M := \max\left\{\max_i \|x_i\|, \max_i \|y_i\|\right\}.\]
We will show that $\Gamma$ is $\epsilon$-robust for any $\epsilon>0$ satisfying
\[4M\epsilon < \min_{i\neq j}f(i, j).\]
In fact, for any distinct $\sigma(1), \sigma(2), \dots, \sigma(n) \in [k]$, by the definition of $f$-strong cyclical monotonicity, 
\[\sum_{i = 1}^n \la x_{\sigma(i)}, y_{\sigma(i)} - y_{\sigma(i+1)} \ra \ge \sum_{i = 1}^n f(\sigma(i), \sigma(i+1))\]
Thus for any choice of $\alpha_{\sigma(1)}, \dots, \alpha_{\sigma(n)}$ such that $\max \|\alpha_{\sigma(i)}\| \le \epsilon$, we have
\begin{equation*}
    \begin{aligned}
    & \frac{1}{2}\sum_{i = 1}^n \|(x_{\sigma(i)}+\alpha_{\sigma(i)})
     - (y_{\sigma(i+1)} + \alpha_{\sigma(i+1)})\|^2 - \frac{1}{2}\sum_{i = 1}^n \|x_{\sigma(i)} - y_{\sigma(i)}\|^2 \\
     =& \sum_{i = 1}^n \la x_{\sigma(i)}, y_{\sigma(i)} - y_{\sigma(i+1)} \ra 
     + \sum_{i = 1}^n \la \alpha_{\sigma(i)}, x_{\sigma(i)} - x_{\sigma(i-1)} + y_{\sigma(i)} - y_{\sigma(i+1)} \ra +\frac{1}{2}\sum_{i = 1}^n \|\alpha_{\sigma(i)} - \alpha_{\sigma(i+1)}\|^2\\
     \ge & \sum_{i = 1}^n f(\sigma(i), \sigma(i+1)) - 4nM\epsilon\\
     > &\  0.
    \end{aligned}
\end{equation*}
Hence $R(\Gamma) > 0$. 

On the other hand, given $R(\Gamma) > 0$, we show that $\Gamma$ is the unique optimal transport plan from $\{x_i\}$ to $\{y_i\}$. We prove by contradiction. If $\Gamma$ is not unique, then there exists distinct $\sigma(1), \dots, \sigma(n) \in [k]$ such that
\begin{equation}
    \label{cost-equal}
    \sum_{i = 1}^n \|x_{\sigma(i)} - y_{\sigma(i)}\|^2 = \sum_{i = 1}^n \|x_{\sigma(i)} - y_{\sigma(i+1)}\|^2.
\end{equation}
Since $R(\Gamma) > 0$, for $\epsilon_0 = R(\Gamma)/2$ and any choice of $\sigma(1), \dots, \sigma(n)$ with $\|\sigma(i)\|\le \epsilon_0$, we have
\begin{equation*}
\sum_{i = 1}^n \|x_{\sigma(i)} - y_{\sigma(i)}\|^2 
     \le \sum_{i = 1}^n \|(x_{\sigma(i)}+\alpha_{\sigma(i)})
     - (y_{\sigma(i+1)} + \alpha_{\sigma(i+1)})\|^2.
\end{equation*}
Specifically, for any $j\in [n]$, letting $\sigma(i) = 0$ for all $i\neq j$ in the above equation gives
\[2\la \alpha_{\sigma(j)}, x_{\sigma(j)} - y_{\sigma(j+1)} \ra  \le \|\alpha_{\sigma(j)}\|^2\]
for any $\alpha_{\sigma(j)}\in \RR^d$ with $\|\alpha_{\sigma(j)}\| \le \epsilon_0$. Therefore we must have
\[x_{\sigma(j)} = y_{\sigma(j+1)}, \quad \forall\ j\in [k].\]
Using~\eqref{cost-equal}, we also know that
\[x_{\sigma(j)} = y_{\sigma(j)}, \quad \forall\ j\in [k],\]
which violates the assumption that $\{y_i\}$ are distinct points in $\RR^d$. Thus we conclude that $\Gamma$ is unique; hence it is also strongly cyclically monotone due to Proposition~\ref{prop:equiv}.
\end{proof}

\begin{proof}[Proof of Proposition~\ref{prop:lb-eps-1}]
We only need to show that, for an $\epsilon$ satisfying~\eqref{eq:eps-up-bound}, and any choice of $\sigma(1), \sigma(2), \dots, \sigma(n) \in [k]$, and $\alpha(1), \dots, \alpha(n)$ with $\|\alpha(i)\| \le \epsilon$, there holds 
\begin{equation}
\label{ineq:cm-prop-0}
    \sum_i \|x_{\sigma(i)} - y_{\sigma(i)}\|^2  \le \sum_i \|(x_{\sigma(i)}+\alpha_{\sigma(i)}) - (y_{\sigma(i+1)}
     + \alpha_{\sigma(i+1)})\|^2.
\end{equation}
In fact, \eqref{ineq:cm-prop-0} is equivalent to
\begin{equation}
\label{cond:eps-robust-simple}
2\sum_i \la \alpha_{\sigma(i)}, y_{\sigma(i+1)} - y_{\sigma(i)} + x_{\sigma(i-1)} - x_{\sigma(i)} \ra 
\le 2\sum_i \la x_{\sigma(i)}, y_{\sigma(i)} - y_{\sigma(i+1)} \ra + \sum_i \|\alpha_{\sigma(i)}-\alpha_{\sigma(i+1)}\|^2
\end{equation}
Since $\|\alpha(i)\| \le \epsilon$ for all $i$, we have
\begin{equation*}
\label{calc:robust-lhs}
    \begin{aligned}
        & 2\sum_i \la \alpha_{\sigma(i)}, y_{\sigma(i+1)} - y_{\sigma(i)} + x_{\sigma(i-1)} - x_{\sigma(i)} \ra \\
        &\le 2\sum_i \epsilon \cdot \left(\|y_{\sigma(i+1)} - y_{\sigma(i)}\| + \|x_{\sigma(i+1)} - x_{\sigma(i)}\|\right)\\
        &\le \sum_i f(\sigma(i), \sigma(i+1))
    \end{aligned}
\end{equation*}
\noindent
where we used the choice of $\epsilon$ in the last inequality. In the meantime, strong implementability gives
\begin{equation*}
2\sum_i \la x_{\sigma(i)}, y_{\sigma(i)} - y_{\sigma(i+1)} \ra + \sum_i \|\alpha_{\sigma(i)}-\alpha_{\sigma(i+1)}\|^2 
    \ge \sum_{i} f(\sigma(i), \sigma(i+1)).
\end{equation*}
Therefore~\eqref{cond:eps-robust-simple} holds, which completes the proof.
\end{proof}

\begin{proof}[Proof of Proposition~\ref{prop:lb-R}]
Following the proof of Proposition~\ref{prop:lb-eps-1}, we only need to show that, for the residual $f(i, j)$ defined in Theorem~\ref{thm:convex-equiv}, there holds
\begin{equation}
\label{cond:eps-robust-simple-coro}
    2\sum_i \epsilon \cdot \left(\|y_{\sigma(i+1)} - y_{\sigma(i)}\| + \|x_{\sigma(i+1)} - x_{\sigma(i)}\|\right) \le \sum_i f(\sigma(i), \sigma(i+1)).
\end{equation}
By the choice of $\epsilon$, we have
\begin{equation*}
\begin{aligned}
& 2\sum_i \epsilon \cdot \left(\|y_{\sigma(i+1)} - y_{\sigma(i)}\| + \|x_{\sigma(i+1)} - x_{\sigma(i)}\|\right)\\
        & \le \sum_i \max\left\{\frac{1}{\beta}\|x_{\sigma(i+1)} - x_{\sigma(i)}\|^2 , \alpha \|y_{\sigma(i+1)} - y_{\sigma(i)}\|^2\right\}.
        \end{aligned}
\end{equation*}
\noindent
Meanwhile,
\begin{equation*}
\label{calc:robust-rhs}
    \begin{aligned}
    &\sum_i f(\sigma(i), \sigma(i+1)) \\
    &= \frac{1}{\beta - \alpha}\sum_i\left(\|x_{\sigma(i)} - x_{\sigma(i+1)}\|^2
         + \alpha\beta \|y_{\sigma(i)} - y_{\sigma(i+1)}\|^2 - 2\alpha \la y_{\sigma(i)} - y_{\sigma(i+1)}, x_{\sigma(i)} - x_{\sigma(i+1)}\ra\right)\\
         &\ge \frac{1}{\beta - \alpha}\sum_i\left(\|x_{\sigma(i)} - x_{\sigma(i+1)}\|^2
         + \alpha\beta \|y_{\sigma(i)} - y_{\sigma(i+1)}\|^2 - \alpha \left(\lambda\|x_{\sigma(i)} - x_{\sigma(i+1)}\|^2
         + \frac{1}{\lambda}\|y_{\sigma(i)} - y_{\sigma(i+1)}\|^2 \right)\right).
    \end{aligned}
\end{equation*}
The last inequality holds for any $\lambda > 0$ by the Cauchy-Schwarz inequality. Choosing $\lambda = 1/\beta$ and $\lambda = 1/\alpha$ yields
\begin{equation*}
\sum_i f(\sigma(i), \sigma(i+1))
    \ge \max\left\{\frac{1}{\beta}\|x_{\sigma(i+1)} - x_{\sigma(i)}\|^2, \alpha \|y_{\sigma(i+1)} - y_{\sigma(i)}\|^2\right\}.
\end{equation*}

Therefore~\eqref{cond:eps-robust-simple-coro} holds, which completes the proof.
\end{proof}

\begin{proof}[Proof of Proposition~\ref{prop:max-local-improve}]
For $M>0$, denote
\begin{equation*}
    g(m)  := \sup\left\{ \sum_{i = 1}^n \|x_{\sigma(i)} - y_{\sigma(i)}\|^2  - \sum_{i = 1}^n \|(x_{\sigma(i)}+\alpha_{\sigma(i)}) - (y_{\sigma(i+1)}+\alpha_{\sigma(i+1)})\|^2 :
     \ \max_i\|\alpha_{\sigma(i)}\| = m\right\},
\end{equation*}
then $G(M) = \sup\{g(m): m\in [0, M]\}$. We first prove that $g(m)$ is concave in $m$. In fact, denote the set
\begin{equation*}
    \sI =   \left\{  (\sigma(1), \dots, \sigma(n), \alpha_{\sigma(1)}, \dots, \alpha_{\sigma(n)}):\
     \sigma(i)\in [k], \ \sigma(i)\neq\sigma(j),\ \max_i \|\alpha_{\sigma(i)}\| = 1 \right\}.
\end{equation*}
By definition, 
\begin{equation*}
    \begin{aligned}
     g(m) & = \sup \left\{ \sum_{i = 1}^n \|x_{\sigma(i)} - y_{\sigma(i)}\|^2 
      - \sum_{i = 1}^n \|(x_{\sigma(i)}+m\alpha_{\sigma(i)}) - (y_{\sigma(i+1)}+m\alpha_{\sigma(i+1)})\|^2 :\right.\\
     &\left. \quad (\sigma(1), \dots, \sigma(n), \alpha_{\sigma(1)}, \dots, \alpha_{\sigma(n)}) \in \sI\right\}
    \end{aligned}
\end{equation*}
Note that, for every choice of $(\sigma(1), \dots, \sigma(n))$ and  $\alpha_{\sigma(1)}, \dots, \alpha_{\sigma(n)}) \in \sI$, 
\begin{align*}
    \sum_{i = 1}^n \|x_{\sigma(i)} - y_{\sigma(i)}\|^2
     - \sum_{i = 1}^n \|(x_{\sigma(i)}+m\alpha_{\sigma(i)}) - (y_{\sigma(i+1)}+m\alpha_{\sigma(i+1)})\|^2
\end{align*}
is a concave function in $m$. Therefore, $g(m)$ is concave in $m$, and $G(M)$ is also concave in $M$.
\end{proof}

\begin{proof}[Proof of Lemma~\ref{lemma:linear-loss-split}]
First suppose that $x, y_1, y_2$ are not on the same line with $y_1$ between $x$ and $y_2$ or $y_2$ between $x$ and $y_1$. Let $\Delta$ be the bisecting hyperplane of $\angle y_1 x y_2$, namely
\[\Delta = \left\{z\in\RR^d\ :\ \frac{\la z - x, y_1 - x\ra}{|y_1 - x|} = \frac{\la z - x, y_2 - x \ra }{|y_2 - x|}\right\},\]
and define its unit normal vector $\mathbf{m}$ such that $\la \mathbf{m}, y_1 - x \ra > 0$. We adopt the decomposition
\begin{equation}
    \begin{aligned}
     \mu_{+} &:= \sN(x, \sigma^2)\ | \ \la z-x, \mathbf{m} \ra > 0, \\
     \mu_{-} &:= \sN(x, \sigma^2)\ | \ \la z-x, \mathbf{m} \ra < 0,
    \end{aligned}
\end{equation}
and
\begin{equation}
    \begin{aligned}
    \nu_{1+} &:= \sN(y_1, \sigma^2)\ | \ \la z-y_1, \mathbf{m} \ra > 0, \\
    \nu_{1-} & := \sN(y_1, \sigma^2)\ | \ \la z-y_1, \mathbf{m} \ra < 0, \\
    \nu_{2+} &:= \sN(y_2, \sigma^2)\ | \ \la z-y_2, \mathbf{m} \ra > 0, \\
    \nu_{2-} & := \sN(y_2, \sigma^2)\ | \ \la z-y_2, \mathbf{m} \ra < 0.
    \end{aligned}
\end{equation}
Note that all the six sub-probability measures above have mass $1/2$. By the definition of $W_2$, we have
\begin{equation}
\label{eq:w2-split}
    W_2^2(\mu_0\ast\sN_\sigma, \nu_0\ast\sN_\sigma) \le 
    \frac{1}{2}\left(W_2^2(\mu_+, \nu_{1+}) + W_2^2(\mu_+, \nu_{1-}) + W_2^2(\mu_-, \nu_{2+}) + W_2^2(\mu_-, \nu_{2-})\right).
\end{equation}
It is obvious that 
\[W_2^2(\mu_+, \nu_{1+}) = \frac{1}{2}\|x - y_1\|^2, \quad W_2^2(\mu_-, \nu_{2-}) = \frac{1}{2}\|x-y_2\|^2.\]
For $W_2^2(\mu_+, \nu_{1-})$, consider the map
\[T_{\#}(x + t)\ =\ y_1 - t, \quad t \sim \sN(0, \sigma^2 I) \]
we have
\begin{equation*}
\begin{aligned}
    W_2^2(\mu_+, \nu_{1-}) &\le \EE_{u\sim \mu_+} \|u - T_{\#}u\|^2\\
    &= \EE_{u\sim\mu_+} \|u - (y_1-u+x)\|^2\\
    &= \frac{1}{2}\|x-y_1\|^2 - 4 \EE_{u\sim\mu_+} \la y_1 - x, u - x\ra + 4\EE_{u\sim\mu_+} \|u-x\|^2\\
    &= \frac{1}{2}\|x-y_1\|^2 - 4c_1\sigma \la \mathbf{m}, y_1 - x\ra + 4c_2\sigma^2,
    \end{aligned}
\end{equation*}
where $c_1$ and $c_2$ are absolute positive constants. Similarly, 
\begin{equation*}
    W_2^2(\mu_-, \nu_{2+}) \le \frac{1}{2}\|x-y_2\|^2 - 4c_1\sigma\la \mathbf{m}, x - y_2\ra + 4c_2\sigma^2.
\end{equation*}
Plugging into~\eqref{eq:w2-split} we get
\[ W_2^2(\mu_0\ast\sN_\sigma, \nu_0\ast\sN_\sigma) \le W_2^2(\mu_0, \nu_0) - 4c_1\sigma \la \mathbf{m}, y_1 - y_2\ra + 8c_2\sigma^2,\]
hence $W_2^2(\mu_0, \nu_0) - W_2^2(\mu_0\ast\sN_\sigma, \nu_0\ast\sN_\sigma) \gtrsim \sigma$ for small $\sigma$, since $\la \mathbf{m}, y_1 - y_2\ra > 0$.\\

Finally, we consider the special case where $x, y_1, y_2$ are on the same line and $y_1$ is between $x$ and $y_2$. We choose $\mathbf{m}$ the unit vector along the direction $x - y_1$, and the same line of proof yields the conclusion.
\end{proof}

\begin{proof}[Proof of Lemma~\ref{lemma:local-no-change}]
We naturally split the source measure into $k$ parts:
\[\mu \ast Q = \sum_{i = 1}^k\left(\alpha_i\delta(x_i)\ast Q\right)\]
Consider a map $T$ which, for each $i\in [k]$, is defined by
\[T(x) = x+y_i - x_i \quad \quad \forall x \in B(x_i, \sigma_*)\,.\]
We can obtain a transport plan between $\mu * Q$ and $\nu * Q$ by considering the distribution of a pair of random variables $(X, T(X))$ for $X \sim \mu * Q$.
The support of this plan lies in the set $\bigcup_{i=1}^k \bigcup_{\alpha \in B(0, \sigma_*)} (x_i + \alpha, y_i + \alpha)$.
By the definition of $R(\Gamma)$, this set is cyclically monotone, so this coupling is optimal for $\mu * Q$ and $\nu * Q$ by Theorem~\ref{monotone_opt}.
Therefore
\begin{equation*}
\begin{aligned}
W^2_2(\mu\ast Q, \nu\ast Q) & = \int \|x - T(x)\|^2 d(\mu * Q)(x) \\
& = \sum_{i=1}^k \alpha_i \|y_i - x_i\|^2 = W_2^2(\mu, \nu)\,,
\end{aligned}
\end{equation*}
as claimed.
\end{proof}
\begin{proof}[Proof of Theorem~\ref{thm:main-exp}]
Define the truncated smoothing kernel
\[\tilde{\sN}_{\sigma} := p \sN(0,\sigma^2 I | \|X\|< \epsilon_\ast) + (1-p)\delta(0) \]
where
\[p = \PP\left[\|\sN(0, \sigma^2 I)\| < \epsilon_\ast\right].\]
Since $\tilde{\sN}_\sigma$ is supported on $B(0, \epsilon_\ast)$, by Lemma~\ref{lemma:local-no-change}, we know
\[W_2(\mu\ast \tilde{\sN}_{\sigma}, \nu\ast\tilde{\sN}_{\sigma})  = W_2(\mu, \nu).\]
Therefore,
\begin{align*}
    &\ |W_2(\mu\ast \sN_\sigma, \nu\ast\sN_\sigma)-W_2(\mu, \nu)|^2 \\
    =&\ |W_2(\mu\ast \sN_\sigma, \nu\ast\sN_\sigma)-W_2(\mu\ast \tilde{\sN}_\sigma, \nu\ast\tilde{\sN}_\sigma)|^2\\
    \le&\ (W_2(\mu\ast \sN_\sigma, \mu\ast \tilde{\sN}_\sigma) + W_2(\nu\ast \sN_\sigma, \nu\ast \tilde{\sN}_\sigma))^2\\
    \lesssim &\ \EE_{z\sim \sN(0, \sigma^2 I)} \left[\|z\|^2 \one_{\|z\|\ge \sigma_\ast}\right]\\
    =&\ \sigma^2\ \EE_{z\sim \sN(0,I)} \left[\|z\|^2 \one_{\|z\|\ge \sigma_\ast/\sigma}\right]\\
    \lesssim &\ \sigma \sigma_\ast e^{-\sigma_\ast^2 / 2\sigma^2}.
\end{align*}
Taking square root on both sides yields the result.
\end{proof}


\bibliographystyle{alpha}
\bibliography{main}

\end{document}